\theoremstyle{plain}
\newtheorem{theorem}{Theorem}[section]
\newtheorem{lemma}[theorem]{Lemma}
\newtheorem{proposition}[theorem]{Proposition}
\newtheorem{corollary}[theorem]{Corollary}
\theoremstyle{definition}
\newtheorem{definition}[theorem]{Definition}
\newtheorem{remark}[theorem]{Remark}
\newtheorem{example}[theorem]{Example}
\newcommand{\st}{\ \vline\ } 
\newcommand{\tensor}{\otimes}
\newcommand{\Ltensor}{\overset{L}{\tensor}} 
\newcommand{\iso}{\cong}      
\newcommand{\isoto}{\overset{\sim}{\to}}
\newcommand{\sheaf}[1]{\mathcal{#1}} 
\newcommand{\OO}{\sheaf{O}}   
\newcommand{\abdual}[1]{\widehat{#1}} 
\newcommand{\dual}[1]{{#1}^\vee}
\newcommand{\res}[2]{\left.#1\right|_{#2}} 
\newcommand{\dualres}[2]{\left.#1\right|_{#2}^{\vee}} 
\DeclareMathOperator{\Hom}{Hom}
\DeclareMathOperator{\Ext}{Ext}
\author{Martin G. Gulbrandsen}
\address{Royal Institute of Technology\\Stockholm\\Sweden}
\email{gulbr@kth.se}
\title[Fourier-Mukai transforms of line bundles]{Fourier-Mukai transforms of line bundles on derived equivalent abelian varieties}
\begin{document}

\maketitle
\tableofcontents

\section{Introduction}\label{sec:intro}

In connection with their work on generic vanishing and related
topics \cite{PP-GV}, Pareschi and Popa raise the
following question: Let $Y$ be a fine moduli space of stable
sheaves on a smooth projective variety $X$, and fix a universal
family $\sheaf{E}$ on $X\times Y$.  Let $\sheaf{L}$ be an ample
line bundle on $Y$, let $d=\dim Y$ and put
\begin{equation}\label{eq:G}
\sheaf{G}_n =
R^dp_{1*}\left(p_2^*(\sheaf{L}^{-n})\tensor\sheaf{E}\right)
\end{equation}
where $p_i$ denote the projections from $X\times Y$. Under
suitable hypotheses (for instance as in Example \ref{ex:IT}), the sheaf
$\sheaf{G}_n$ is locally free for $n$ sufficiently large;
it is the
Fourier-Mukai transform of $\sheaf{L}^{-n}$ with respect to
$\sheaf{E}$. What can be said about these bundles?
In particular, are they stable, and are they ample?

Not much is known about bundles of the above form --- in fact the
opposite construction has received much more attention,
i.e.~constructing sheaves on the moduli space $Y$ as
Fourier-Mukai transforms of sheaves on $X$.

In this text we work out the following special case:

\begin{theorem}\label{thm:main}
Let $X$ be an abelian variety and $Y$ a fine moduli space of
simple semihomogeneous bundles on $X$. Fix an ample line bundle
$\sheaf{L}$ on $Y$ and define $\sheaf{G}_n$ by \eqref{eq:G}.
Then, for sufficiently large $n$, the following holds:
\begin{enumerate}
\item $\sheaf{G}_n$ is a simple semihomogeneous bundle. In
particular it is stable.
\item $\sheaf{G}_n$ is ample if and only if the bundles
$\res{\sheaf{E}}{X\times\{y\}}$ parametrized by $Y$ are nef.
\end{enumerate}
\end{theorem}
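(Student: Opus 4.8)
The plan is to analyze $\sheaf G_n$ through the Fourier-Mukai functor $\Phi=\Phi_{\sheaf E}\colon D^b(Y)\to D^b(X)$, $\Phi(F)=Rp_{1*}(p_2^*F\tensor\sheaf E)$. Since $Y$ is a fine moduli space of simple semihomogeneous bundles with universal family $\sheaf E$, $Y$ is an abelian variety and $\Phi$ is an equivalence, by Mukai's theory of semihomogeneous bundles and Orlov's classification of derived equivalences of abelian varieties. First I would show that, for $n\gg 0$, $R^ip_{1*}(p_2^*\sheaf L^{-n}\tensor\sheaf E)$ vanishes for $i\neq d$, so that $\Phi(\sheaf L^{-n})\iso\sheaf G_n[-d]$. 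Vanishing for $i>d$ is automatic; for $i<d$ one uses that $\sheaf E$ is locally free on $X\times Y$ (being $Y$-flat with locally free fibers), so the restrictions $W_x:=\res{\sheaf E}{\{x\}\times Y}$ form a bounded family of bundles on the abelian variety $Y$, and Serre duality gives $H^i(Y,\sheaf L^{-n}\tensor W_x)\iso H^{d-i}(Y,\sheaf L^{n}\tensor\dual{W_x})^{\vee}$, which vanishes for $i<d$ as soon as $n$ exceeds a Serre vanishing bound uniform over this bounded family. The one surviving group $H^d$ has locally constant dimension, so $\sheaf G_n=R^dp_{1*}(\,\cdot\,)$ is locally free, and it is a simple sheaf because $\sheaf G_n[-d]=\Phi(\sheaf L^{-n})$ while $\Phi$ is an equivalence carrying the simple object $\sheaf L^{-n}$ to a simple object.

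For semihomogeneity I would use that $\Phi$ intertwines the actions of $Y\times\abdual Y$ and $X\times\abdual X$ by translations and by twists with degree-zero line bundles, through a symplectic isomorphism $f\colon Y\times\abdual Y\toiso X\times\abdual X$; this amounts, via the universal property of $\sheaf E$, to isomorphisms of the form $(t_x\times 1)^*\sheaf E\iso\sheaf E\tensor p_1^*M\tensor p_2^*N_x$ with $(x,M)\in\Sigma(\sheaf E_y)$ and a suitable $N_x\in\abdual Y$ (together with their counterparts for twists of $\sheaf E$). It then follows that $\Sigma(\sheaf G_n)$ contains the image under $f$ of $\Sigma(\sheaf L^{-n})$, which is the $d$-dimensional graph of $\pm\phi_{\sheaf L^{-n}}$; since $\sheaf G_n$ is simple one has $\dim\Sigma(\sheaf G_n)\le d=\dim X$, hence equality, so $\Sigma(\sheaf G_n)$ surjects onto $X$ and $\sheaf G_n$ is semihomogeneous by definition. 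Stability is then Mukai's theorem that a simple semihomogeneous bundle is stable with respect to any polarization. This proves~(1).

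For~(2) I would invoke the following fact from Mukai's structure theory: after pullback along a suitable isogeny, a simple semihomogeneous bundle $V$ becomes a direct sum of algebraically equivalent line bundles; hence it has a slope class $\delta(V)=c_1(V)/\operatorname{rk}(V)\in\NS(X)_{\QQ}$, and --- since ampleness and nefness both ascend and descend along isogenies --- $V$ is ample (resp.\ nef) precisely when $\delta(V)$ is. As $\delta(\sheaf E_y)=:\delta(\sheaf E)$ is independent of $y$, the bundles $\res{\sheaf E}{X\times\{y\}}$ are nef if and only if $\delta(\sheaf E)$ is nef, so it remains to compare $\delta(\sheaf G_n)$ with $\delta(\sheaf E)$. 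From $\Phi(k_y)=\sheaf E_y$ one gets $\Sigma(\sheaf E_y)=f(\{0\}\times\abdual Y)$, whence (up to sign) $\phi_{\delta(\sheaf E)}=d\circ b^{-1}$ for an isogeny $b\colon\abdual Y\to X$. Computing $\delta(\sheaf G_n)$ --- either from Grothendieck-Riemann-Roch for $p_1$, which (the relative Todd class being trivial) gives $\operatorname{ch}(\sheaf G_n)=(-1)^d p_{1*}\big(e^{-np_2^*c_1(\sheaf L)}\operatorname{ch}(\sheaf E)\big)$, or from the known transformation of the slope under $f$ applied to $\phi_{\sheaf L^{-n}}=-n\phi_{\sheaf L}$ --- and using the symplectic relations on $f$, one should arrive at
\[
\delta(\sheaf G_n)=\delta(\sheaf E)+\tfrac1n\,\theta+O(1/n^2),\qquad \phi_\theta=\widehat{b^{-1}}\circ\phi_{\sheaf L}^{-1}\circ b^{-1},
\]
so that $\theta$ is the pullback along the isogeny $b^{-1}\colon X\to\abdual Y$ of the dual polarization of $(Y,\sheaf L)$, hence an ample class on $X$. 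The conclusion is then formal: if $\delta(\sheaf E)$ is nef, then $\delta(\sheaf E)+\tfrac1n\theta$ is ample (nef plus ample) with a margin of order $1/n$, so the $O(1/n^2)$ term leaves $\delta(\sheaf G_n)$ ample for $n\gg 0$; if $\delta(\sheaf E)$ is not nef it lies in the open complement of the nef cone, and $\delta(\sheaf G_n)\to\delta(\sheaf E)$ then shows that $\sheaf G_n$ fails to be nef, a fortiori not ample, for $n\gg 0$.

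I expect the hard part to be the last computation in~(2): isolating the first-order term $\theta$ and proving it is a genuine ample class. This requires making the action of $\Phi$ on N\'eron-Severi data precise in the isogeny category of abelian varieties --- where homomorphisms compose in a fixed order and classes are defined only up to isogeny --- and, most delicately, handling the borderline case in which $\delta(\sheaf E)$ is nef but not ample, where the truth of the theorem rests entirely on $\theta$ pointing strictly into the ample cone. A secondary, though essentially known, point is to pin down the precise shape of $\Phi$ and of the intertwining isomorphism $f$ out of Mukai's structure theory for semihomogeneous bundles as organized by Orlov.
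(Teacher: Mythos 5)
Part (1) of your sketch is essentially sound. Where you establish semihomogeneity by transporting the graph $\Gamma(\sheaf{L}^{-n})$ through Orlov's intertwining isomorphism $f$, the paper takes a shorter route: since $\Phi_{\sheaf{E}}$ is an equivalence, $\Ext^i_Y(\sheaf{L}^{-n},\sheaf{L}^{-n})\iso\Ext^i_X(\sheaf{G}_n,\sheaf{G}_n)$ for all $i$, and among simple bundles the condition $\dim\Ext^1=g$ characterizes semihomogeneity (Proposition \ref{prop:semihom}); this avoids any discussion of how $f$ acts on graphs. (Also, the bound $\dim\Gamma(\sheaf{G}_n)\le g$ does not need simplicity: for any locally free sheaf the first projection $\Gamma(\sheaf{E})\to X$ has finite fibres, contained in translates of $\abdual{X}_r$, by taking determinants.)

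Part (2) is where the problem lies. Your strategy --- establish $\delta(\sheaf{G}_n)=\delta(\sheaf{E})+\tfrac1n\theta+O(1/n^2)$ and argue inside the nef and ample cones --- is genuinely different from the paper's, and you have correctly identified its weak point, but flagging a gap is not the same as closing it: in the borderline case where $\delta(\sheaf{E})$ is nef but not ample (e.g.\ the Poincar\'e bundle itself, which is the motivating example), the entire truth of the statement rests on $\theta$ being honestly ample, and that is precisely the step you leave unproved. Verifying it amounts to controlling the sign and nondegeneracy of a class of the shape $p_{1*}\bigl(\gamma^2\,p_2^*c_1(\sheaf{L})^{d-1}\bigr)$, with $\gamma$ the mixed K\"unneth component of $c_1(\sheaf{E})$, and nothing you have set up delivers this. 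The paper sidesteps Chern classes entirely: using that $\Phi_{\dual{\sheaf{E}}}$ and $\Psi_{\sheaf{E}}[g]$ are quasi-inverse and that $\dualres{\sheaf{E}}{X\times\{0\}}$ satisfies WIT with a computable index $i_0$ (Proposition \ref{prop:wit} and Lemma \ref{lem:push}), it obtains $H^p(X,\sheaf{G}_n)\iso\dual{H^{g-p-i_0}(Y,\sheaf{F}\tensor\sheaf{L}^n)}$ for a fixed sheaf $\sheaf{F}$, concludes that $\sheaf{G}_n$ is nondegenerate of index $i(\sheaf{Q})-\dim K(\sheaf{Q})$ (Theorem \ref{thm:index}), and reads off ampleness from the criterion ``nondegenerate of index $0$'' together with the characterization of nef line bundles by $i(\sheaf{L})=\dim K(\sheaf{L})$ (Corollary \ref{cor:nef}). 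That computation is short, handles the borderline case for free, and yields strictly more information (the exact index of $\sheaf{G}_n$). As written, your proposal for (2) is a program rather than a proof.
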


The definition of semihomogeneous bundles is recalled in Section
\ref{sec:terminology}. Our viewpoint is that their moduli spaces
are of the simplest possible form: In the mostly expository sections
\ref{sec:moduli} and \ref{sec:derived} we recall results of
Mukai and Orlov showing that any moduli space $Y$ of
semihomogeneous bundles on $X$ is again an abelian variety of the
same dimension as $X$, and the Fourier-Mukai functor associated to
a universal family is an equivalence $D(X)\iso D(Y)$ of derived
categories. Suitably formulated converses to these statements
also hold.

Any line bundle is semihomogeneous, and the prototype for a
moduli space of semihomogeneous sheaves is the dual abelian
variety $Y=\abdual{X}$, equipped with the normalized Poincar\'e
bundle $\sheaf{P}$. In this case, the above theorem is well
known; in fact the bundle $\sheaf{G}_n$ is stable and ample
already for $n=1$. This can be deduced from a result of Mukai
\cite{mukai81},
saying that the pullback of $\sheaf{G}_1$ under the canonical
isogeny
\begin{equation*}
\phi_{\sheaf{L}}\colon \abdual{X}\to X,\quad \xi\mapsto
T_{\xi}^*(\sheaf{L})\tensor\dual{\sheaf{L}}
\end{equation*}
(viewing $X$ as the dual of $\widehat{X}$), is just
\begin{equation*}
H^0(\abdual{X}, \sheaf{L}) \tensor_k \sheaf{L},
\end{equation*}
i.e.~a direct sum of a suitable number of copies of $\sheaf{L}$
itself.

One motivation for studying bundles of the form $\sheaf{G}_n$ is
the r\^ole they play in Pareschi and Popa's approach to generic
vanishing: Returning to the general case, with $Y$ an arbitrary
moduli space of sheaves on a smooth projective variety $X$, we
say that a sheaf $\sheaf{F}$ on $X$ satisfies generic vanishing
with respect to the universal family $\sheaf{E}$ if, for each
$i$, the closed set
\begin{equation*}
\left\{
y\in Y \st H^i(X, \sheaf{F}\tensor\sheaf{E}_y) \ne 0
\right\}
\end{equation*}
has codimension at least $i$. A (special case of a) criterion of Pareschi and Popa
says that the bundle $\sheaf{G}_n$ can be used to test generic
vanishing. Namely, $\sheaf{F}$ satisfies generic vanishing if and
only if
\begin{equation*}
H^i(X, \sheaf{F}\tensor\sheaf{G}_n) = 0
\end{equation*}
for all $i>0$. Here, it is enough to test with a bundle
$\sheaf{G}_n$ associated to a fixed ample line bundle $\sheaf{L}$
and a fixed, but large, integer $n$.
Pareschi and Popa use their criterion, together with Mukai's
description of $\sheaf{G}_n$ in the case of an abelian variety
and its dual, to prove a generalization of the Green-Lazarsfeld
generic vanishing theorem \cite[Theorem A]{PP-GV}. The upshot is
that a good understanding of the bundles $\sheaf{G}_n$, and in
particular their positivity properties, is required to make the
generic vanishing criterion effective.

The first part of the theorem (with weaker hypotheses) is obtained
as Corollary \ref{cor:stable}, as an immediate consequence of the results
of Mukai and Orlov. Theorem \ref{thm:index} is a more precise version of
the second part: 
We prove that the bundle
$\sheaf{G}_n$ always satisfies an index theorem, and its index can
be computed. By demanding its index to be zero, we arrive at the
criterion for ampleness, stated as Corollary \ref{cor:ample}.

This project was initiated during the summer school Pragmatic 2007
in Catania, Italy. I am most grateful to Mihnea Popa and Giuseppe
Pareschi, not only for their lectures, but also for putting an
immense effort into helping each participant identify and develop
concrete and accessible problems.

\section{Terminology}\label{sec:terminology}

Throughout, let $X$ denote an abelian variety of dimension $g$ over an
algebraically closed field $k$ of arbitrary characteristic. We write
$\abdual{X}$ for the dual abelian variety.

Points in $\abdual{X}$ are denoted by Greek letters $\xi,\zeta,\dots$,
and we use the same symbols for the corresponding homogeneous line
bundles on $X$. For each point $x\in X$, we write $T_x\colon X\to X$
for translation by $x$. If $\sheaf{L}$ is a line bundle on $X$,
we write $K(\sheaf{L})\subseteq X$ for the subgroup of points
$x\in X$ satisfying $T_x^*(\sheaf{L})\iso\sheaf{L}$.

We use the words vector bundle and line bundle as synonyms for locally
free sheaf and invertible sheaf. By stability, we mean
Gieseker-stability with respect to any fixed polarization, the
choice of which will not matter to us.

\begin{definition}[Mukai \cite{mukai78}]
A coherent sheaf $\sheaf{E}$ on $X$ is \emph{semihomogeneous} if the
locus
\begin{equation*}
\Gamma(\sheaf{E}) = \{ (x,\xi) \in X\times\abdual{X} \st
T_x^*(\sheaf{E})\iso\sheaf{E}\tensor\xi \}
\end{equation*}
has dimension $g$.
\end{definition}

If $\sheaf{E}$ locally free, then it is semihomogeneous if and only
if the following condition holds: For each $x\in X$, there is a
$\xi\in\abdual{X}$, such that
\begin{equation*}
T_x^*(\sheaf{E})\iso\sheaf{E}\tensor\xi.
\end{equation*}
The equivalence with the
definition given above follows from noting that the kernel of the
first projection $p_1\colon \Gamma(\sheaf{E})\to X$ is finite: In fact,
its kernel is contained in the group of $r$-torsion points
$\abdual{X}_r$, where $r$ is the rank of $\sheaf{E}$.

Next we set up notation for the Fourier-Mukai transform. We write
$D(X)$ for the bounded derived category of a variety $X$, equipped with the
auto\-functors $\sheaf{C}\mapsto\sheaf{C}[i]$ that shift a complex
$\sheaf{C}$ the specified number $i$ steps to the left. We view
a sheaf as a complex concentrated in degree $0$; thus $D(X)$
contains the category of coherent sheaves.
Let $X$ and
$Y$ be two varieties (both will be abelian varieties in our context),
and let
\begin{equation*}
X \xleftarrow{p_1} X\times Y \xrightarrow{p_2} Y
\end{equation*}
denote the projections. To any coherent sheaf $\sheaf{E}$ (or, more
generally, any bounded complex) on the product $X\times Y$, we associate
a pair of functors between the derived categories of $X$ and $Y$:

\begin{definition}
The \emph{Fourier-Mukai functors} with kernel $\sheaf{E}$ are the two
functors
\begin{align*}
\Phi_{\sheaf{E}}&\colon D(X) \to D(Y),
& \Phi_{\sheaf{E}}(-) &= Rp_{2*}(p_1^*(-)\Ltensor\sheaf{E})\\
\Psi_{\sheaf{E}}&\colon D(Y) \to D(X),
& \Psi_{\sheaf{E}}(-) &= Rp_{1*}(p_2^*(-)\Ltensor\sheaf{E}).
\end{align*}
We write $\Phi_{\sheaf{E}}^i(-)$ and $\Psi_{\sheaf{E}}^i(-)$ for the $i$'th
cohomology sheaf of $\Phi_{\sheaf{E}}(-)$ and $\Psi_{\sheaf{E}}(-)$.
\end{definition}

\begin{definition}
Given a triple $(X,Y,\sheaf{E})$ as above and a coherent sheaf
$\sheaf{F}$ on $X$, we say that
\begin{enumerate}
\item $\sheaf{F}$ satisfies the \emph{index theorem} (IT) with respect to
  $\sheaf{E}$ if there exists an integer $i_0$ such that
\begin{equation*}
H^i(X, \sheaf{F}\tensor\sheaf{E}_y) = 0\quad\text{for all $i\ne i_0$}.
\end{equation*}
\item $\sheaf{F}$ satisfies the \emph{weak index theorem} (WIT) with respect to
  $\sheaf{E}$ if there exists an integer $i_0$ such that
\begin{equation*}
\Phi^i_{\sheaf{E}}(\sheaf{F}) = 0\quad\text{for all $i\ne i_0$}.
\end{equation*}
\end{enumerate}
\end{definition}

Suppose that the kernel $\sheaf{E}$ of the Fourier-Mukai functor is
a $Y$-flat coherent sheaf. Then the base change theorem in cohomology
shows that IT implies WIT. The integer $i_0$ in the definition will be
referred to as the $\sheaf{E}$-index of $\sheaf{F}$, denoted
$i_{\sheaf{E}}(\sheaf{F})$.

\begin{example}\label{ex:IT}
Let $X$ and $Y$ be projective varieties, and
let $\sheaf{E}$ be a vector bundle on $X\times Y$. Assume that
$Y$ has a dualizing sheaf $\omega_Y$.
If $\sheaf{L}$ is an ample line bundle on $Y$ and $n$
is sufficiently large, then $\sheaf{L}^{-n}$ satisfies IT with
respect to $\sheaf{E}$, and its $\sheaf{E}$-index is $d=\dim Y$.
This follows
from Serre's theorems: We have
\begin{equation*}
  H^i(Y, \sheaf{L}^{-n}\tensor \res{\sheaf{E}}{\{x\}\times Y}) \iso
  \dual{H^{d-i}(Y, \sheaf{L}^n\tensor
    \dualres{\sheaf{E}}{\{x\}\times Y}\tensor\omega_Y)}
\end{equation*}
and the latter vanishes for $n$ sufficiently large if $i\ne d$.
The bound on $n$ can be made independent of $x$, since the
dimension of the cohomology vector space above is upper semicontinuous
as a function of $x$.
\end{example}

\begin{definition}
Let $\sheaf{F}$ be a coherent sheaf satisfying WIT with respect to
$\sheaf{E}$, and let $i_0$ denote its $\sheaf{E}$-index. The
\emph{Fourier-Mukai transform} of $\sheaf{F}$ with respect to
$\sheaf{E}$ is the coherent sheaf
$\Phi^{i_0}_{\sheaf{E}}(\sheaf{F})$.
\end{definition}

When we do not specify the kernel explicitly, we will always mean the
Fourier-Mukai functor with respect to the Poincar\'e line bundle on
$X\times\abdual{X}$. Thus, in this case, and in this case only, we
will write $\Phi$ and $\Psi$ with no subscript, and, if $\sheaf{F}$
satisfies WIT, its index (i.e.~its $\sheaf{P}$-index) is denoted
$i(\sheaf{F})$. In this case we also use the notation
\begin{equation*}
\abdual{\sheaf{F}} = \Phi^{i(\sheaf{F})}(\sheaf{F})
\end{equation*}
for the Fourier-Mukai transform.

\section{Moduli spaces of semihomogeneous
bundles}\label{sec:moduli}

Let $M$ be the moduli space of stable sheaves on $X$. Let $\sheaf{E}$
be a stable vector bundle of rank $r$ on $X$, and let $Y\subseteq M$
be the connected component containing $\sheaf{E}$. Recall that
the tangent space to $Y$ at $\sheaf{E}$ is canonically isomorphic
to $\Ext^1_X(\sheaf{E}, \sheaf{E})$.

\begin{proposition}\label{prop:semihom}
The following are equivalent.
\begin{enumerate}
\item $\sheaf{E}$ is semihomogeneous
\item $\dim_k \Ext^1_X(\sheaf{E},\sheaf{E}) = g$
\item $Y$ is an abelian variety isogeneous to $X$.
\end{enumerate}
\end{proposition}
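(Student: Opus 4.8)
The plan is to prove the cycle of implications $(1)\Rightarrow(2)\Rightarrow(3)\Rightarrow(1)$, exploiting the fact that the tangent space to $Y$ at $[\sheaf{E}]$ is $\Ext^1_X(\sheaf{E},\sheaf{E})$, together with Mukai's structure theory for semihomogeneous bundles.

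\emph{Proof of $(1)\Rightarrow(2)$.} Suppose $\sheaf{E}$ is semihomogeneous, so $\Gamma(\sheaf{E})\subseteq X\times\abdual{X}$ has dimension $g$. As noted in Section \ref{sec:terminology}, the first projection $\Gamma(\sheaf{E})\to X$ has finite kernel (contained in $\abdual{X}_r$), hence is surjective onto a subvariety of dimension $g$, i.e.~$\Gamma(\sheaf{E})\to X$ is an isogeny. Each point $x\in X$ therefore gives an isomorphism $T_x^*(\sheaf{E})\iso\sheaf{E}\tensor\xi$ for a suitable $\xi$; in particular all translates $T_x^*(\sheaf{E})$ are again stable and lie in $Y$, so translation induces a morphism $X\to Y$ whose image has dimension $g$ (its fibers are again finite, being contained in the finitely many $\xi$ with $\sheaf{E}\iso\sheaf{E}\tensor\xi$ composed with a translation). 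Since $\dim_k \Ext^1_X(\sheaf{E},\sheaf{E}) = \dim T_{[\sheaf{E}]}Y \geq \dim Y \geq g$, it remains to bound $\dim_k\Ext^1_X(\sheaf{E},\sheaf{E})$ from above by $g$. For this one uses that $\sheaf{E}$ simple semihomogeneous forces $\chi(\sheaf{E},\sheaf{E}) = \sum_i (-1)^i \dim_k\Ext^i_X(\sheaf{E},\sheaf{E})$ to be computed via Hirzebruch-Riemann-Roch; Mukai's results show the relevant Mukai pairing vanishes, giving $\chi(\sheaf{E},\sheaf{E})=0$, and moreover $\Ext^i_X(\sheaf{E},\sheaf{E})\iso \Ext^1_X(\sheaf{E},\sheaf{E})^{\wedge i}$ has dimension $\binom{g}{i}$ by semihomogeneity, forcing equality $\dim_k\Ext^1 = g$. (If one prefers to avoid invoking structure theory at this point, one can argue instead that $\Ext^1_X(\sheaf{E},\sheaf{E})$ maps injectively to the tangent space of $X\times\abdual{X}$ along $\Gamma(\sheaf{E})$ via deformation of the isomorphisms $T_x^*\sheaf{E}\iso\sheaf{E}\tensor\xi$, which has dimension $g$.)

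\emph{Proof of $(2)\Rightarrow(3)$.} If $\dim_k\Ext^1_X(\sheaf{E},\sheaf{E})=g$ then $\dim T_{[\sheaf{E}]}Y = g$. Since $\sheaf{E}$ is simple, $\Ext^0_X(\sheaf{E},\sheaf{E})=k$ and the moduli space $Y$ is smooth at $[\sheaf{E}]$ of dimension $g$; smoothness propagates over the connected component, and the translation action of $X$ on $Y$ (which is free up to finite kernel, as above) exhibits $Y$ as a homogeneous space, hence an abelian variety isogenous to $X$. The key input is that $Y$ is projective and carries a transitive action of the abelian variety $X/({\text{finite}})$, so it acquires a group structure making it an abelian variety, and the translation morphism $X\to Y$ is the required isogeny.

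\emph{Proof of $(3)\Rightarrow(1)$.} If $Y$ is an abelian variety of dimension $g$ isogenous to $X$, then in particular $\dim T_{[\sheaf{E}]}Y = g$, i.e.~$\dim_k\Ext^1_X(\sheaf{E},\sheaf{E}) = g$; unwinding, we must produce for each $x\in X$ a $\xi$ with $T_x^*\sheaf{E}\iso\sheaf{E}\tensor\xi$. The point is that translation $T_x$ acts on $M$, fixing the component $Y$, and tensoring by $\abdual{X}$ also acts on $Y$ (both $Y$ and $Y\tensor\abdual{X}$ lie in the same component, being deformation equivalent since $\abdual{X}$ is connected); comparing dimensions, the orbit of $[\sheaf{E}]$ under $X\times\abdual{X}$ acting by $(x,\xi)\cdot\sheaf{F} = T_x^*\sheaf{F}\tensor\xi^{-1}$ is all of $Y$, and the stabilizer, pulled back to $X\times\abdual{X}$, is exactly $\Gamma(\sheaf{E})$; a dimension count $\dim\Gamma(\sheaf{E}) = \dim(X\times\abdual{X}) - \dim Y = 2g - g = g$ gives semihomogeneity.

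\textbf{Main obstacle.} The routine parts are the deformation-theoretic identifications and dimension counts; the genuinely delicate point is establishing that the group action of $X$ (or $X\times\abdual{X}$) on the connected component $Y$ is \emph{transitive} — equivalently, that $Y$ has no ``extra'' moduli beyond translates and twists of $\sheaf{E}$. For $(2)\Rightarrow(3)$ this is what upgrades ``$Y$ is smooth of dimension $g$ with a free-up-to-finite $X$-action'' to ``$Y$ is a single orbit, hence a torsor under an abelian variety.'' I expect this to rest on the fact that the $X$-orbit is open (by the tangent space computation, the orbit map is smooth) and closed (properness of $Y$) in the connected $Y$, hence everything; the subtlety is making sure the orbit map is genuinely smooth at every point, which requires the dimension-$g$ hypothesis to hold not just at $[\sheaf{E}]$ but, via the translation action, everywhere on the orbit — and then one must know $Y$ equals that orbit, not merely contains it, for which connectedness plus open-and-closed does the job.
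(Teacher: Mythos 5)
There are two genuine gaps. The first is that your basic group action is the wrong one: you repeatedly lean on the translation map $x\mapsto T_x^*(\sheaf{E})$ having finite fibres, but its fibre over $[\sheaf{E}]$ is $\{x\in X \st T_x^*(\sheaf{E})\iso\sheaf{E}\}$, and semihomogeneity does not make this finite --- for a homogeneous bundle (e.g.\ $\sheaf{E}=\OO_X$, or any $\xi\in\abdual{X}$, where $Y=\abdual{X}$) the translation action on $Y$ is \emph{trivial}. This kills the lower bound $\dim Y\ge g$ in your $(1)\Rightarrow(2)$ and, worse, the transitivity claim at the heart of your $(2)\Rightarrow(3)$: the orbit can be a single point, so ``open and closed in connected $Y$'' never gets off the ground. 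What semihomogeneity controls is the \emph{first} projection $\Gamma(\sheaf{E})\to X$, not the second. The paper's fix is to use the twisting action of $\abdual{X}$ instead: the map $\tau\colon\abdual{X}\to Y$, $\xi\mapsto\sheaf{E}\tensor\xi$, is finite because composing it with the determinant map $\delta\colon\sheaf{F}\mapsto\det(\sheaf{F})\tensor\sheaf{Q}^{-1}$ gives $r_{\abdual{X}}$, which is finite; this is what makes $\tau(\abdual{X})$ a $g$-dimensional locus of smooth points covering the connected $Y$, and then $\delta$ is \'etale and $Y$ is abelian. Your assertion that ``smoothness propagates over the connected component'' is likewise unjustified as stated; the paper gets it precisely because every point of $Y$ is exhibited as a twist $\sheaf{E}\tensor\xi$, hence is itself semihomogeneous with $g$-dimensional tangent space.

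The second gap is the upper bound $\dim_k\Ext^1_X(\sheaf{E},\sheaf{E})\le g$ in $(1)\Rightarrow(2)$. Your Euler-characteristic argument is vacuous: if $\dim\Ext^1=m$ and $\Ext^i\iso\bigwedge^i\Ext^1$, then $\chi(\sheaf{E},\sheaf{E})=\sum_i(-1)^i\binom{m}{i}=0$ for \emph{every} $m\ge1$, so $\chi=0$ forces nothing; and the exterior-algebra structure of the Ext-algebra is itself a consequence of Mukai's structure theory, so invoking it here is close to circular. The parenthetical deformation-theoretic alternative (injecting $\Ext^1$ into the tangent space of $\Gamma(\sheaf{E})$) is the right idea but is exactly the nontrivial content. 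The paper does not attempt this: it cites the equivalence $(1)\Leftrightarrow(2)$ to Mukai outright, calls it the deepest part, notes $(3)\Rightarrow(2)$ is obvious, and only proves $(1)\Rightarrow(3)$ by the $\tau$--$\delta$ argument above. Your $(3)\Rightarrow(1)$ sketch is essentially salvageable (one only needs $\dim\Gamma(\sheaf{E})\ge 2g-\dim Y=g$ together with $\dim\Gamma(\sheaf{E})\le g$ from finiteness of the first projection's fibres, not actual transitivity), but as written the proposal's cycle breaks at both of the points above.
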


\begin{proof}
The equivalence of (1) and (2) is due to Mukai \cite{mukai78}, and this
is the deepest part.
It is obvious that (3) implies (2). We next show that (1)
implies (3).

Let $\sheaf{Q}$ be the determinant of $\sheaf{E}$. We can form a
commutative diagram
\begin{diagram}
&\abdual{X} &\rTo^\tau &Y \\
& &\rdTo_{r_{\abdual{X}}} & \dTo_\delta\\
& & & \abdual{X}
\end{diagram}
where the twisting map $\tau$ sends a homogeneous line bundle
$\xi\in\abdual{X}$ to $\sheaf{E}\tensor\xi$ and the determinant map
$\delta$ sends a sheaf $\sheaf{F}\in Y$ to the homogeneous line bundle
$\det(\sheaf{F})\tensor\sheaf{Q}^{-1}$. The composition is
multiplication by $r$, since $\det(\sheaf{E}\tensor\xi)\iso
\sheaf{Q}\tensor\xi^r$.

Since the composed map $r_{\abdual{X}}$ is finite, so is $\tau$, and
hence its image is $g$-dimensional. The
bundles parametrized by the image of $\tau$ are clearly
semihomogeneous, hence, using the equivalence (1)$\iff$(2), we conclude that $Y$ is
nonsingular and $g$-dimensional at all points of
$\tau(\abdual{X})$. Since $Y$ is connected this implies that
$\tau(\abdual{X})=Y$, and so $Y$ is a nonsingular $g$-dimensional
variety.

Now let $Y(\sheaf{Q})\subset Y$ be the subscheme
$\delta^{-1}(0)$, i.e.~the locus in $Y$ parametrizing bundles with
fixed determinant $\sheaf{Q}$. Then there is a Cartesian diagram
\begin{diagram}
&\abdual{X}\times Y(\sheaf{Q}) &\rTo &Y \\
&\dTo & &\dTo_\delta \\
&\abdual{X} &\rTo^{r_{\abdual{X}}} & \abdual{X}
\end{diagram}
where the left map is projection onto the first factor and the top map
sends a pair $(\xi,\sheaf{F})$ to the tensor product $\sheaf{F}\tensor\xi$. In
particular, the determinant map $\delta$ is locally trivial in the
étale topology. Since $Y$ is nonsingular, this implies that $\delta$
is étale. A variety admitting an étale map to an abelian variety is
itself an abelian variety, so we are done.
\end{proof}

\section{Derived equivalent abelian varieties as moduli
spaces}\label{sec:derived}

Let $X$ and $Y$ be abelian varieties, and suppose there exists a
derived equivalence
\begin{equation*}
D(X) \isoto D(Y).
\end{equation*}
By results of Orlov \cite{orlov}, any such equivalence is a
Fourier-Mukai transform $\Phi_{\sheaf{E}}$, with kernel a sheaf
$\sheaf{E}$ (i.e.~a complex concentrated in one degree, although
not necessarily degree zero) on the product
$X\times Y$.  Moreover, this sheaf is semihomogeneous. The
semihomogeneity is perhaps only almost explicit in Orlov's work --- so
here is a short account:

Associated to $\sheaf{E}$, Orlov constructs an isomorphism
\begin{equation*}
f\colon X\times\abdual{X} \isoto Y\times\abdual{Y},
\end{equation*}
which on points is given by
\begin{gather*}
f(x,\xi) = (y,\zeta)\label{eq:graph}\\
\Updownarrow\\
T_{(x,y)}^*(\sheaf{E}) \iso \sheaf{E}\tensor (p_1^*(\xi^{-1})\tensor
p_2^*(\zeta)).\label{eq:phi}
\end{gather*}
This says that a quadruple $(x,\xi,y,\zeta)$ belongs to the graph of
$f$ if and only if $(x,y,\xi^{-1},\zeta)$ belongs to
$\Gamma(\sheaf{E})$, with notation as in Section
\ref{sec:terminology}. Since
the graph of $f$ is $2g$-dimensional, so is $\Gamma(\sheaf{E})$, which
shows that $\sheaf{E}$ is semihomogeneous.

For simplicity, we will assume that $\sheaf{E}$ is also locally
free.

\begin{proposition}
Let $X$ and $Y$ be abelian varieties and $\sheaf{E}$ a vector bundle
on their product $X\times Y$. Then the following are equivalent.
\begin{enumerate}
\item The Fourier-Mukai transform $\Phi_{\sheaf{E}}\colon D(X)\to D(Y)$ with
kernel $\sheaf{E}$ is an equivalence.
\item The variety $Y$, equipped with the family $\sheaf{E}$, is a
fine moduli space of simple semihomogeneous vector bundles on $X$.
\end{enumerate}
\end{proposition}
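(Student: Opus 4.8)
I would prove the two implications separately. For $(1)\Rightarrow(2)$, pass to the adjoint: since $X$ and $Y$ are abelian varieties of the same dimension $g$ (necessarily equal, as $\Phi_{\sheaf E}$ is an equivalence) with trivial canonical bundles, the left and the right adjoint of $\Phi_{\sheaf E}$ both coincide with $\Psi_{\sheaf E^\vee}[g]$; hence if $\Phi_{\sheaf E}$ is an equivalence, so is $\Psi_{\sheaf E^\vee}\colon D(Y)\to D(X)$. As $\Psi_{\sheaf E^\vee}(k(y))=\sheaf E_y^\vee$, this gives, for all $y,y'\in Y$ and all $i$,
\[
\Ext^i_X(\sheaf E_{y'},\sheaf E_y)\iso\Ext^i_X(\sheaf E_y^\vee,\sheaf E_{y'}^\vee)\iso\Hom_{D(Y)}\bigl(k(y),k(y')[i]\bigr).
\]
Taking $i=0$ shows that each $\sheaf E_y$ is simple and that the $\sheaf E_y$ are pairwise non-isomorphic; taking $i=1$, $y=y'$ shows $\dim_k\Ext^1_X(\sheaf E_y,\sheaf E_y)=\dim_k T_yY=g$, the resulting isomorphism being (up to the canonical identification $\Ext^1_X(\sheaf E_y,\sheaf E_y)\iso\Ext^1_X(\sheaf E_y^\vee,\sheaf E_y^\vee)$) the Kodaira--Spencer map of the family $\sheaf E$.

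By Mukai \cite{mukai78}, a simple bundle on $X$ with $\dim_k\Ext^1=g$ is semihomogeneous, and simple semihomogeneous bundles are stable; hence $\sheaf E$ induces a classifying morphism $c\colon Y\to M$ to the moduli space of stable sheaves on $X$, which is \'etale (the Kodaira--Spencer map is bijective) and injective on points. Since $Y$ is proper, $c$ is a finite \'etale morphism of degree one onto a connected component of $M$, hence an isomorphism onto it; therefore $Y$, equipped with $\sheaf E$ (the universal family of that component, up to a twist by a line bundle pulled back from $Y$), is a fine moduli space of simple semihomogeneous bundles on $X$.

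For $(2)\Rightarrow(1)$, I would verify the orthogonality criterion of Bondal--Orlov and Bridgeland for the objects $\Phi_{\sheaf E}(k(x))=\res{\sheaf E}{\{x\}\times Y}$. By Proposition \ref{prop:semihom}, $Y$ is an abelian variety of dimension $g$ and the twisting map $\tau\colon\abdual X\to Y$, $\xi\mapsto[\sheaf E_0\tensor\xi]$ (for a fixed fibre $\sheaf E_0$), is a surjective isogeny. Pulling $\sheaf E$ back along $\mathrm{id}_X\times\tau$ and applying the seesaw principle to the simple bundles $\sheaf E_0\tensor\xi$ gives $(\mathrm{id}_X\times\tau)^*\sheaf E\iso p_X^*\sheaf E_0\tensor\sheaf P\tensor p_{\abdual X}^*N$ for some $N\in\Pic(\abdual X)$; writing $L_x:=\res{\sheaf P}{\{x\}\times\abdual X}\tensor N$, it follows that $\tau^*\bigl(\res{\sheaf E}{\{x\}\times Y}\bigr)\iso L_x^{\oplus r}$ with $r=\mathrm{rk}\,\sheaf E$. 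Therefore, $\tau$ being faithfully flat,
\[
\Ext^i_Y\bigl(\res{\sheaf E}{\{x\}\times Y},\res{\sheaf E}{\{x'\}\times Y}\bigr)\ \hookrightarrow\ \Ext^i_{\abdual X}\bigl(L_x^{\oplus r},L_{x'}^{\oplus r}\bigr)\iso H^i\bigl(\abdual X,\res{\sheaf P}{\{x'-x\}\times\abdual X}\bigr)\tensor_k M_r(k),
\]
which vanishes for every $i$ when $x\neq x'$, because $\res{\sheaf P}{\{x'-x\}\times\abdual X}$ is then a nontrivial line bundle of degree $0$. This is exactly the required orthogonality (the ranges $i<0$ and $i>g=\dim X$ being automatic on the $g$-dimensional $Y$), \emph{provided that one also knows each $\res{\sheaf E}{\{x\}\times Y}$ to be simple}; granting this, $\Phi_{\sheaf E}$ is fully faithful, and since $\dim X=\dim Y$ and $\omega_Y\iso\OO_Y$ it is an equivalence.

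The main obstacle is precisely this simplicity of the bundles $\res{\sheaf E}{\{x\}\times Y}$ on $Y$. Using the fineness of $Y$ --- which turns translation by $X$ and twisting by $\abdual X$ into homomorphisms $X\to Y$ and $\abdual X\to Y$ --- together with the seesaw principle, one checks that the locus $\Gamma(\sheaf E)\subseteq(X\times Y)\times(\abdual X\times\abdual Y)$ is the image of $X\times\abdual X$, so that $\sheaf E$ is semihomogeneous on $X\times Y$; restricting the semihomogeneity condition to $\{x\}\times Y$ then shows that $\res{\sheaf E}{\{x\}\times Y}$ is semihomogeneous on $Y$, and for a semihomogeneous bundle simplicity is equivalent to indecomposability. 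Deducing indecomposability on $Y$ from the simplicity of the fibres $\sheaf E_y$ over $X$ is where I expect to need Mukai's structure theorem: writing $\sheaf E_0=\pi_*(\text{a line bundle})$ for a suitable isogeny $\pi$, the simplicity conditions over $X$ and over $Y$ become complementary non-degeneracy statements about the associated theta-group data. Equivalently, via the isogenies $\pi$ and $\tau$ and a line-bundle twist one may reduce $\Phi_{\sheaf E}$ to the classical transform $\Phi_{\sheaf P}$, which is an equivalence by Mukai's theorem, the delicate point being that the isogenies assemble into an equivalence; a less elementary alternative is to observe that $\Gamma(\sheaf E)$ determines an isometry $X\times\abdual X\isoto Y\times\abdual Y$, feed it into Orlov's converse \cite{orlov} to obtain a derived equivalence with semihomogeneous kernel, and identify $\Phi_{\sheaf E}$ with that equivalence up to an inessential shift and twist.
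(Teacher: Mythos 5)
Your direction $(1)\Rightarrow(2)$ is sound and is essentially the paper's argument: full faithfulness applied to skyscrapers gives simplicity and pairwise non-isomorphism of the fibres, one then gets semihomogeneity and maps $Y$ isomorphically onto a component of the moduli space. Two small remarks: you obtain semihomogeneity from $\dim_k\Ext^1_X(\sheaf{E}_y,\sheaf{E}_y)=g$ together with Mukai's criterion (the equivalence (1)$\iff$(2) of Proposition \ref{prop:semihom}), whereas the paper gets it from Orlov's result that the kernel of any derived equivalence of abelian varieties is semihomogeneous on $X\times Y$; both work, and yours is arguably more self-contained. Also, your appeal to an identification of the $\Ext^1$-isomorphism with the Kodaira--Spencer map is an extra claim that needs justification but is avoidable: the paper simply notes that a degree-one map between abelian varieties of the same dimension is an isomorphism, so injectivity on points plus Proposition \ref{prop:semihom} already suffices.

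The direction $(2)\Rightarrow(1)$, however, has a genuine gap, which you yourself flag: you apply the Bondal--Orlov criterion to the objects $\Phi_{\sheaf{E}}(k(x))=\res{\sheaf{E}}{\{x\}\times Y}$, and therefore need these bundles \emph{on $Y$} to be simple, which is not part of hypothesis (2) and is not established by your sketch. Your pullback along $\tau$ cannot supply it (it only embeds $\Hom_Y$ into an $r\times r$ matrix algebra), and the proposed remedies --- theta-group nondegeneracy, or constructing an isometry and invoking Orlov's converse and then identifying $\Phi_{\sheaf{E}}$ with the resulting equivalence ``up to an inessential shift and twist'' --- are not carried out; the last identification is essentially the content of the statement being proved. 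The paper sidesteps this entirely by testing the kernel along the fibres over $Y$ instead: the conditions become $\Hom(\sheaf{E}_y,\sheaf{E}_y)=k$, which is literally the hypothesis that $Y$ parametrizes \emph{simple} semihomogeneous bundles, and $\Ext^i_X(\sheaf{E}_y,\sheaf{E}_{y'})=0$ for all $i$ and $y\ne y'$, which is the one nontrivial input, supplied by Mukai's results on semihomogeneous bundles (cited via Lemma~4.8 of Orlov); full faithfulness plus triviality of the canonical bundles then gives the equivalence. Your orthogonality computation via $\tau^*(\res{\sheaf{E}}{\{x\}\times Y})\iso L_x^{\oplus r}$ is a nice analogue of that input on the other factor, but it attacks the side where simplicity is unknown rather than the side where it is given. (A secondary caveat: the injection $\Ext^i_Y(-,-)\hookrightarrow\Ext^i_{\abdual{X}}(\tau^*-,\tau^*-)$ uses a splitting of $\OO_Y\to\tau_*\OO_{\abdual{X}}$, which is automatic for separable isogenies but needs care in positive characteristic, where the paper works as well.)
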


\begin{remark}\label{rem:stable}
Every simple semihomogeneous vector bundle is (Gieseker) stable,
with respect to any polarization \cite{mukai78}.
\end{remark}

\begin{proof}
By a criterion of Bondal and Orlov \cite[Corollary 7.5]{huybrechts}, the
functor $\Phi_{\sheaf{E}}$ is fully faithful if and only if
\begin{align}
\Hom(\sheaf{E}_y,\sheaf{E}_y)&=k & &\text{for all $y$}\label{eq:simple}\\
\Ext^i(\sheaf{E}_y, \sheaf{E}_{y'})&=0 & &\text{for all $i$ and $y\ne
  y'$}.\label{eq:orthogonal}
\end{align}
We also need the fact that, by the triviality of the canonical bundles on $X$
and $Y$, the functor $\Phi_{\sheaf{E}}$ is fully faithful if and only if it is an
equivalence \cite[Corollary 7.8]{huybrechts}.

First assume that $Y$, equipped with $\sheaf{E}$, is a moduli space of
simple semihomogeneous vector bundles on $X$. Then
\eqref{eq:simple} is fulfilled since the fibres $\sheaf{E}_y$ are
simple, and \eqref{eq:orthogonal} follows from Mukai's work on
homogeneous and semihomogeneous vector bundles --- see Lemma~4.8 in Orlov's
paper \cite{orlov}. So $\Phi_{\sheaf{E}}$ is fully faithful and hence an
equivalence.

Conversely, assume $\Phi_{\sheaf{E}}$ is an equivalence. Since
\eqref{eq:simple} is fulfilled, the fibres $\sheaf{E}_y$ are simple,
and they are semihomogeneous since $\sheaf{E}$ is a semihomogeneous
bundle on $X\times Y$. Since $\sheaf{E}$ is locally free, it is flat
over $Y$, and so induces a morphism
\begin{equation*}
f\colon Y \to M
\end{equation*}
to the moduli space $M$ of stable sheaves on $X$. By Proposition
\ref{prop:semihom}, this map $f$ hits a component $M'$ of $M$ which is an abelian
$g$-dimensional variety. Since \eqref{eq:orthogonal} is fulfilled,
all distinct fibres $\sheaf{E}_y$ and $\sheaf{E}_{y'}$ are non
isomorphic, which says that $f$ has degree $1$. Thus $f$ is an
isomorphism, being a degree $1$ map
between abelian varieties of the same dimension.
\end{proof}

\begin{corollary}\label{cor:stable}
Let $Y$ be a fine moduli space for simple semihomogeneous vector bundles on
$X$, with a fixed universal family $\sheaf{E}$. Let $\sheaf{L}$ be a
line bundle on $Y$ satisfying IT with respect to $\sheaf{E}$. Then the
Fourier-Mukai transform $\sheaf{G}$ of $\sheaf{L}$ with respect to $\sheaf{E}$
is a simple semihomogeneous vector bundle on $X$. In particular it is
stable.
\end{corollary}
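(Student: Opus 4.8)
The plan is to deduce everything from the fact that the second Fourier--Mukai functor $\Psi_{\sheaf{E}}\colon D(Y)\to D(X)$ is again an equivalence. This is the preceding Proposition applied with the roles of $X$ and $Y$ interchanged; concretely, since $X$ and $Y$ have trivial canonical bundle, a quasi-inverse of $\Phi_{\sheaf{E}}$ is $\Psi_{\sheaf{E}^{\vee}}$ up to a shift, and the Bondal--Orlov conditions \eqref{eq:simple}--\eqref{eq:orthogonal} for the kernels $\sheaf{E}$ and $\sheaf{E}^{\vee}$ agree, because $\Ext^i(\sheaf{A},\sheaf{B})\cong\Ext^i(\sheaf{B}^{\vee},\sheaf{A}^{\vee})$ for vector bundles on a smooth projective variety. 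Granting this, the hypothesis that $\sheaf{L}$ satisfies IT, combined with cohomology and base change as in Example \ref{ex:IT}, shows that $\sheaf{G}$ is a vector bundle and that $\Psi_{\sheaf{E}}(\sheaf{L})\cong\sheaf{G}[-i_0]$ in $D(X)$, with $i_0=i_{\sheaf{E}}(\sheaf{L})$.

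Simplicity of $\sheaf{G}$ is then formal: as $\Psi_{\sheaf{E}}$ is fully faithful,
\[
\Hom_X(\sheaf{G},\sheaf{G})=\Hom_{D(X)}\bigl(\sheaf{G}[-i_0],\sheaf{G}[-i_0]\bigr)=\Hom_{D(Y)}(\sheaf{L},\sheaf{L})=k,
\]
the last equality because $\sheaf{L}$ is invertible on the integral variety $Y$. Since a simple semihomogeneous bundle is stable by Remark \ref{rem:stable}, it remains only to prove that $\sheaf{G}$ is semihomogeneous, i.e. that for each $x\in X$ there exists a $\xi\in\abdual{X}$ with $T_x^*\sheaf{G}\cong\sheaf{G}\tensor\xi$.

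For this I would exploit that a translation of the kernel $\sheaf{E}$ corresponds, under $\Psi_{\sheaf{E}}$, to a translation-and-twist on $Y$. Flat base change along $T_{(x,0)}$ and $T_{(0,y)}$ together with the projection formula show that, whenever $T_{(x,y)}^*\sheaf{E}\cong\sheaf{E}\tensor p_1^*\eta\tensor p_2^*\bigl(\phi_{\sheaf{L}}(y)^{-1}\bigr)$ for some $\eta\in\abdual{X}$ and $y\in Y$, one has
\[
T_x^*\Psi_{\sheaf{E}}(\sheaf{L})\;\cong\;\eta\tensor\Psi_{\sheaf{E}}\bigl(T_y^*\sheaf{L}\tensor\phi_{\sheaf{L}}(y)^{-1}\bigr)\;\cong\;\eta\tensor\Psi_{\sheaf{E}}(\sheaf{L})\;=\;\eta\tensor\sheaf{G}[-i_0];
\]
passing to $i_0$-th cohomology then yields $T_x^*\sheaf{G}\cong\sheaf{G}\tensor\eta$.

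The whole argument thus reduces to producing, for each given $x\in X$, such a pair $(y,\eta)$ --- equivalently, to showing that the $g$-dimensional subgroup of $X\times Y$ consisting of those $(x',y')$ admitting an $\eta'$ with $T_{(x',y')}^*\sheaf{E}\cong\sheaf{E}\tensor p_1^*\eta'\tensor p_2^*\phi_{\sheaf{L}}(y')^{-1}$ (cut out inside the $2g$-dimensional group $\Gamma(\sheaf{E})$) surjects onto $X$ under the first projection. I expect this to be the main obstacle. It is equivalent to the finiteness of $\{\,y\in Y\st T_y^*\bigl(\res{\sheaf{E}}{\{x\}\times Y}\tensor\sheaf{L}\bigr)\cong\res{\sheaf{E}}{\{x\}\times Y}\tensor\sheaf{L}\,\}$, which I would deduce from the IT hypothesis: twisting the simple semihomogeneous bundles $\res{\sheaf{E}}{\{x\}\times Y}$ by $\abdual{Y}$ sweeps $x$ over all of $X$ (the stabilisers being finite), so IT of $\sheaf{L}$ with respect to $\sheaf{E}$ forces $\res{\sheaf{E}}{\{x\}\times Y}\tensor\sheaf{L}$ to satisfy IT with respect to the Poincar\'e bundle on $Y$ and hence to be nondegenerate, whence that set is finite. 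Alternatively, semihomogeneity of $\sheaf{G}$ can be obtained from Mukai's theorem that Fourier--Mukai transforms send semihomogeneous sheaves to semihomogeneous sheaves, applied to the semihomogeneous sheaf $\sheaf{L}$, provided one also checks that the transform involved is concentrated in a single degree.
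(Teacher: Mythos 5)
Your overall framework --- deduce everything from the fact that $\Psi_{\sheaf{E}}$ is an equivalence --- is the paper's, and your derivation of simplicity from full faithfulness is exactly the argument given there. The genuine gap is in the semihomogeneity part. The paper never verifies the translation property $T_x^*(\sheaf{G})\iso\sheaf{G}\tensor\xi$ directly; it invokes Proposition \ref{prop:semihom}, by which a simple bundle $\sheaf{G}$ is semihomogeneous if and only if $\dim_k\Ext^1_X(\sheaf{G},\sheaf{G})=g$. Since the equivalence preserves all Ext-groups, $\Ext^1_X(\sheaf{G},\sheaf{G})\iso\Ext^1_Y(\sheaf{L},\sheaf{L})\iso H^1(Y,\OO_Y)$, which is $g$-dimensional because $Y$ is a $g$-dimensional abelian variety. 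That one line replaces the entire second half of your proposal.

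Your direct approach, by contrast, is not brought to completion. You reduce semihomogeneity of $\sheaf{G}$ to the surjectivity onto $X$ of a certain subgroup of $\Gamma(\sheaf{E})$ cut out by the condition $\zeta=\phi_{\sheaf{L}}(y)^{-1}$, and you yourself flag this as ``the main obstacle''. The sketch you offer to overcome it leaves several ingredients unproved: that the subgroup in question is indeed $g$-dimensional; that every twist $\res{\sheaf{E}}{\{x\}\times Y}\tensor\alpha$ with $\alpha\in\abdual{Y}$ is again a member of the family $\bigl\{\res{\sheaf{E}}{\{x'\}\times Y}\bigr\}_{x'\in X}$ (the coordinate of Orlov's isomorphism $f$ you would need for this is not obviously surjective onto $\abdual{Y}$, so this requires a separate dimension count in the moduli space of sheaves on $Y$); and that IT forces nondegeneracy (IT holds vacuously for a sheaf with no cohomology at all, so one must separately argue that the Euler characteristic is nonzero). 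Each of these can plausibly be repaired, and the alternative you mention at the end --- Mukai's preservation of semihomogeneity under suitable Fourier--Mukai transforms --- would also work, but as written the semihomogeneity claim is not established; the cleanest fix is the $\Ext^1$ criterion already available in Proposition \ref{prop:semihom}.
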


\begin{proof}
By the proposition, the functor $\Phi_{\sheaf{E}}$ is an equivalence
of categories, which implies that $\Psi_{\sheaf{E}}$ is an equivalence
also. Thus
\begin{equation*}
\Ext^i_{Y}(\sheaf{L},\sheaf{L}) \iso \Ext^i_{X}(\sheaf{G},\sheaf{G})
\end{equation*}
for all $i$.
In particular, for $i=0$ we get that $\sheaf{G}$ is
simple, and for $i=1$ we get $\dim
\Ext^1_X(\sheaf{G},\sheaf{G})=g$, which implies $\sheaf{G}$ is
semihomogeneous, by Proposition \ref{prop:semihom}. By Remark
\ref{rem:stable}, it is also stable.
\end{proof}

The first part of Theorem \ref{thm:main} follows, since the very
negative line bundle $\sheaf{L}^{-n}$ considered there satisfies
IT, by Example \ref{ex:IT}.

\section{Index theorems}\label{sec:index}

As is well known, every nondegenerate line bundle $\sheaf{L}$ satisfies IT
(with respect to the Poincar\'e bundle). In this section we show that
degenerate line bundles satisfy WIT, and we extend these results to
simple semihomogeneous vector bundles.

The starting point is the following construction by Kempf
\cite{kempf}: Let
$\sheaf{L}$ be a degenerate line bundle on $X$. Let $Y\subseteq
X$ be the identity component of $K(\sheaf{L})$,
with reduced structure, and
let
\begin{equation*}
\pi\colon X\to X/Y
\end{equation*}
be the quotient. Then there exist a nondegenerate line bundle
$\sheaf{M}$ on $X/Y$ and a homogeneous line bundle $\xi\in\abdual{X}$
such that
\begin{equation}\label{eq:degenerate}
\sheaf{L}\iso\pi^*(\sheaf{M})\tensor\xi.
\end{equation}

\begin{proposition}\label{prop:degenerate}
Let $\sheaf{L}$ be a degenerate line bundle, and write
$\sheaf{L} = \pi^*(\sheaf{M})\tensor\xi$ with $\sheaf{M}$ nondegenerate,
as above. Then $\sheaf{L}$ satisfies WIT with index
\begin{equation*}
i(\sheaf{L}) = \dim K(\sheaf{L}) + i(M)
\end{equation*}
and its Fourier-Mukai transform is
\begin{equation*}
\abdual{\sheaf{L}} \iso
T_{\xi}^*(\abdual{\pi}_*(\abdual{\sheaf{M}})).
\end{equation*}
\end{proposition}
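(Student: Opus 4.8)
The plan is to reduce everything to the nondegenerate case via the presentation $\sheaf{L}\iso\pi^*(\sheaf{M})\tensor\xi$ and the compatibility of the Fourier-Mukai functor with isogenies and with tensoring by homogeneous line bundles. First I would dispose of the twist $\xi$: tensoring by a homogeneous line bundle corresponds, under $\Phi$, to translating by the point $\xi\in\abdual{X}$, i.e.\ $\Phi(\sheaf{F}\tensor\xi)\iso T_\xi^*(\Phi(\sheaf{F}))$, and translations are exact and do not affect vanishing of cohomology sheaves. So it suffices to treat $\sheaf{L}=\pi^*(\sheaf{M})$, and the asserted formula becomes $\abdual{(\pi^*\sheaf{M})}\iso\abdual{\pi}_*(\abdual{\sheaf{M}})$ with index $\dim Y + i(\sheaf{M})$.

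Next I would set up the dual picture of the quotient $\pi\colon X\to X/Y$. Dualizing the exact sequence $0\to Y\to X\to X/Y\to 0$ gives $0\to\abdual{X/Y}\xrightarrow{\abdual\pi}\abdual{X}\to\abdual{Y}\to 0$, so $\abdual\pi$ is a closed immersion with image the "sub-abelian-variety" dual to $X/Y$, of codimension $\dim Y$. The key geometric input is the base-change/projection-formula identity relating the Poincaré bundles: $(\pi\times\mathrm{id})^*\sheaf{P}_X$ is pulled back from $X\times\abdual{X/Y}$ along $\mathrm{id}\times\abdual\pi$, equivalently $\sheaf{P}_{X/Y}$ is the restriction of $\sheaf{P}_X$ to $(X/Y)\times\abdual{X/Y}$ after the identification via $\pi$ and $\abdual\pi$. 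Using this together with flat base change in the fibre square built from $\pi$ and the projections, one computes $Rp_{\abdual X *}(p_X^*\pi^*\sheaf{M}\tensor\sheaf{P}_X)$ by first pushing forward along the smooth (surjective, with abelian-variety fibres $Y$) map and then along $\abdual\pi$. Since $\pi$ is a surjection of abelian varieties with kernel $Y$, the pushforward $R\pi_* \OO_X$ is computed by the cohomology of $Y$: concretely $H^j(Y,\OO_Y)=\wedge^j H^1(Y,\OO_Y)$, which is where the index shift by $\dim Y$ enters — the trivial bundle on $Y$ contributes its top cohomology in degree $\dim Y$, and Grothendieck duality (or direct computation, since $\pi$ is flat with all structure sheaves having cohomology concentrated appropriately after the Fourier transform) packages this as a single nonzero cohomology sheaf supported on $\abdual{X/Y}\subseteq\abdual X$.

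The argument then runs: $\Phi_X^i(\pi^*\sheaf{M})$ is, by the above, $\abdual\pi_*$ applied to $\Phi_{X/Y}^{i-\dim Y}(\sheaf{M})$ (up to the spectral sequence degenerating), and since $\sheaf{M}$ is nondegenerate it satisfies IT with a single index $i(\sheaf{M})$; hence $\pi^*\sheaf{M}$ satisfies WIT with index $\dim Y + i(\sheaf{M})=\dim K(\sheaf{L})+i(\sheaf{M})$ (using $Y=K(\sheaf{L})^\circ_{\mathrm{red}}$ so $\dim Y=\dim K(\sheaf{L})$), and its transform is $\abdual\pi_*(\abdual{\sheaf{M}})$. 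Reinstating the twist by $\xi$ gives the stated $\abdual{\sheaf{L}}\iso T_\xi^*(\abdual\pi_*(\abdual{\sheaf{M}}))$. The main obstacle I anticipate is the bookkeeping in the base-change computation — correctly identifying $(\pi\times\mathrm{id})^*\sheaf{P}_X$ with the pullback of $\sheaf{P}_{X/Y}$ and controlling the Leray spectral sequence for the composite $X\times\abdual X \to (X/Y)\times\abdual X \to \abdual X$ so that it degenerates and produces a genuine sheaf rather than a complex; once the Poincaré-bundle identity and the concentration of $R\pi_*\OO_X$ (after Fourier transform) are nailed down, the degree shift and the final formula follow formally.
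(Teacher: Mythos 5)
Your proposal follows essentially the same route as the paper: strip off the homogeneous twist via $\Phi((-)\tensor\xi)\iso T_{\xi}^*(\Phi(-))$, exchange $\pi^*$ with $\abdual{\pi}_*$ at the cost of a shift by $d=\dim Y=\dim K(\sheaf{L})$, and conclude using IT for the nondegenerate $\sheaf{M}$. The only difference is that the paper simply cites the exchange formula $\Phi\circ L\pi^*[d]\iso R\abdual{\pi}_*\circ\Phi$ from Polishchuk, whereas you sketch a direct proof of it; your sketch is plausible, though the Poincar\'e-bundle compatibility should be stated as $(\pi\times 1)^*\sheaf{P}_{X/Y}\iso(1\times\abdual{\pi})^*\sheaf{P}_X$ on $X\times\abdual{X/Y}$ (as written, $(\pi\times\mathrm{id})^*\sheaf{P}_X$ does not typecheck), and the concentration in a single degree holds only for the cohomology sheaves $\Phi^i$, not fibrewise --- $\pi^*\sheaf{M}$ satisfies WIT but not IT.
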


\begin{remark}
Note that, since $\abdual{\pi}$ is an embedding, the expression
$\abdual{\pi}_*(\abdual{\sheaf{M}})$ above simply means
$\abdual{\sheaf{M}}$ considered as a torsion sheaf on $\abdual{X}$.
\end{remark}

\begin{proof}
For any homogeneous line bundle $\xi$, there is a natural isomorphism
\cite{mukai81} 
\begin{equation*}
\Phi((-)\tensor\xi) \iso T_{\xi}^*(\Phi(-)).
\end{equation*}
Furthermore, for an arbitrary homomorphism $\phi\colon X\to Z$ of abelian
varieties, there is an isomorphism \cite[Section 11.3]{polishchuk}
\begin{equation*}
\Phi\circ L\phi^*[d] \iso R\abdual{\phi}_*\circ\Phi
\end{equation*}
where $d=\dim X-\dim Z$. Note that $\Phi$ on the left hand side is the
Fourier-Mukai functor with kernel the Poincar\'e bundle on
$X\times\abdual{X}$, whereas the same symbol on the right hand side is
the Fourier-Mukai functor with kernel the Poincar\'e bundle on
$Z\times\abdual{Z}$.

Since $\pi$ is flat and $\abdual{\pi}$ finite, we have  $L\pi^* = \pi^*$ and
$R\abdual{\pi}_*=\abdual{\pi}_*$. Thus we get, for every integer $i$,
\begin{align*}
\Phi^i(\pi^*(\sheaf{M})\tensor\xi)
&\iso T_{\xi}^*(\Phi^i(\pi^*(\sheaf{M})))\\
&\iso T_{\xi}^*(\abdual{\pi}_*(\Phi^{i-d}(\sheaf{M}))),
\end{align*}
where $d=\dim Y$, which is also the dimension of $K(\sheaf{L})$.
Since $\sheaf{M}$ is nondegenerate, it satisfies IT
with some index $i(\sheaf{M})$. The claim follows.
\end{proof}

\begin{corollary}\label{cor:index}
Every line bundle $\sheaf{L}$ satisfies WIT. The index of a line bundle
satisfies
\begin{enumerate}
\item $i(\dual{\sheaf{L}}) = g + \dim K(\sheaf{L}) - i(\sheaf{L})$,
\item $i(\sheaf{L}\tensor\xi) = i(\sheaf{L})$ for all
  $\xi\in\abdual{X}$,
\item $i(\sheaf{L}^n) = i(\sheaf{L})$ for all $n>0$.
\end{enumerate}
\end{corollary}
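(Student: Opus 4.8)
The plan is to deduce everything from Proposition \ref{prop:degenerate} and the classical fact that nondegenerate line bundles satisfy IT. First, that every line bundle satisfies WIT is immediate: if $\sheaf{L}$ is nondegenerate it satisfies IT, hence WIT; if $\sheaf{L}$ is degenerate, Proposition \ref{prop:degenerate} gives WIT directly. So the content is in the three index formulas, and the natural order is to prove (2) first, then (3), then (1).

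For (2), the formula $i(\sheaf{L}\tensor\xi) = i(\sheaf{L})$ follows from the isomorphism $\Phi((-)\tensor\xi)\iso T_\xi^*(\Phi(-))$ recalled in the proof of Proposition \ref{prop:degenerate}: tensoring by a homogeneous line bundle only translates the Fourier-Mukai transform on $\abdual{X}$, so it cannot change in which degree the cohomology is concentrated. Alternatively one can read it off the formula for $\abdual{\sheaf{L}}$ in Proposition \ref{prop:degenerate} together with the observation that $K(\sheaf{L}\tensor\xi)=K(\sheaf{L})$. For (3), note that $K(\sheaf{L}^n)=K(\sheaf{L})$ (the map $\phi_{\sheaf{L}^n}=n\,\phi_{\sheaf{L}}$ has the same kernel up to the finite group of torsion points, and on identity components they agree), so writing $\sheaf{L}=\pi^*(\sheaf{M})\tensor\xi$ as in Kempf's construction gives $\sheaf{L}^n=\pi^*(\sheaf{M}^n)\tensor\xi^n$ with the \emph{same} $\pi$; then Proposition \ref{prop:degenerate} reduces the claim to the nondegenerate case, i.e.\ to $i(\sheaf{M}^n)=i(\sheaf{M})$, which is classical (the index of a nondegenerate $\sheaf{M}$ equals the number of negative eigenvalues of the associated Hermitian form, and scaling by $n>0$ does not change signs).

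For (1), the relation $i(\dual{\sheaf{L}}) = g + \dim K(\sheaf{L}) - i(\sheaf{L})$ should come from Serre/Grothendieck duality for the Fourier-Mukai functor. In the nondegenerate case ($\dim K(\sheaf{L})=0$) this is the classical statement $i(\dual{\sheaf{M}})=g-i(\sheaf{M})$, which one gets from $\abdual{\dual{\sheaf{M}}}\iso(-1)^*\dual{\abdual{\sheaf{M}}}[-g]$ or directly from the Hermitian-form description (negative eigenvalues of $\dual{\sheaf{M}}$ $\leftrightarrow$ positive eigenvalues of $\sheaf{M}$). For the general degenerate case, apply Proposition \ref{prop:degenerate} to both $\sheaf{L}=\pi^*(\sheaf{M})\tensor\xi$ and $\dual{\sheaf{L}}=\pi^*(\dual{\sheaf{M}})\tensor\xi^{-1}$, giving $i(\sheaf{L})=\dim K(\sheaf{L})+i(\sheaf{M})$ and $i(\dual{\sheaf{L}})=\dim K(\sheaf{L})+i(\dual{\sheaf{M}})$; subtracting and using $i(\sheaf{M})+i(\dual{\sheaf{M}})=\dim(X/Y)=g-\dim K(\sheaf{L})$ yields the claim. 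The main obstacle is really bookkeeping rather than mathematics: one must be careful that $K(\sheaf{L}^n)$ and $K(\dual{\sheaf{L}})$ have the same identity component as $K(\sheaf{L})$ so that the \emph{same} Kempf decomposition applies, and one must cite the correct classical duality statement for the index of a nondegenerate line bundle; once those are in place, all three formulas drop out of Proposition \ref{prop:degenerate} by a one-line computation each.
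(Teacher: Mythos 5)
Your proposal is correct and follows essentially the same route as the paper: reduce to the nondegenerate case via Kempf's decomposition and Proposition \ref{prop:degenerate}, using that the identity component (hence the dimension) of $K(\sheaf{L})$ is unchanged by dualizing, twisting by $\xi\in\abdual{X}$, and taking positive powers. The only cosmetic caveat is that your appeal to the Hermitian-form description of the index is a characteristic-zero picture, whereas the paper works in arbitrary characteristic; the facts you need ($i(\dual{\sheaf{M}})=\dim(X/Y)-i(\sheaf{M})$ and $i(\sheaf{M}^n)=i(\sheaf{M})$ for nondegenerate $\sheaf{M}$) are the standard ones from Mumford's theory of the index and hold in general.
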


\begin{proof}
When $\sheaf{L}$ is nondegenerate, these are standard
facts. Otherwise, write $\sheaf{L}$ as $\pi^*(\sheaf{M})\tensor\xi$
as before, and apply the nondegenerate case to $\sheaf{M}$. The
claims then follow from the formula for $i(\sheaf{L})$ in the
proposition, since
$\dim K(\sheaf{L})$ is invariant under dualizing, twisting with
homogeneous line bundles and positive tensor powers.
\end{proof}

As a further application of Proposition \ref{prop:degenerate}, we give a
characterization of nef line bundles on abelian varieties.

\begin{corollary}\label{cor:nef}
  The following are equivalent conditions on a line bundle $\sheaf{L}$.
  \begin{enumerate}
  \item $\sheaf{L}$ is nef
  \item $i(\sheaf{L}) = \dim K(\sheaf{L})$
  \item There exist an abelian subvariety $Y\subseteq X$ and an ample
    line bundle on $X/Y$ such that
    \begin{equation*}
      \sheaf{L} \iso \pi^*(\sheaf{M})\tensor\xi
    \end{equation*}
    where $\pi\colon X\to X/Y$ is the quotient and $\xi\in\abdual{X}$
    is a homogeneous line bundle.
  \end{enumerate}
\end{corollary}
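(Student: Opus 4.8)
The plan is to prove the three-way equivalence in a cycle, using Kempf's structure result \eqref{eq:degenerate} as the bridge between the geometric condition (1) and the cohomological condition (2), with (3) serving as an intermediate normal form.

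First I would establish (3)$\Rightarrow$(2). Given $\sheaf{L}\iso\pi^*(\sheaf{M})\tensor\xi$ with $\sheaf{M}$ ample on $X/Y$, Proposition \ref{prop:degenerate} (or the nondegenerate case directly, when $Y=0$) gives $i(\sheaf{L}) = \dim K(\sheaf{L})$ as soon as I know $i(\sheaf{M}) = 0$; but an ample line bundle is nondegenerate of index $0$, by the classical index theorem for abelian varieties. One subtlety: I should check that $\dim K(\sheaf{L}) = \dim Y$, i.e.~that $K(\sheaf{L})^\circ$ is precisely $Y$; this holds because $K(\pi^*\sheaf{M}) = \pi^{-1}(K(\sheaf{M}))$ and $K(\sheaf{M})$ is finite, so $K(\sheaf{L})^\circ = \pi^{-1}(0)^\circ = Y$. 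Tensoring by the homogeneous $\xi$ does not change $K(\sheaf{L})$ by Corollary \ref{cor:index}(2), so the count is unaffected.

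Next, (2)$\Rightarrow$(3): assuming $i(\sheaf{L}) = \dim K(\sheaf{L})$, I write $\sheaf{L} = \pi^*(\sheaf{M})\tensor\xi$ via Kempf, where now $\sheaf{M}$ is merely nondegenerate on $X/Y$. The index formula of Proposition \ref{prop:degenerate} reads $i(\sheaf{L}) = \dim K(\sheaf{L}) + i(\sheaf{M})$, so the hypothesis forces $i(\sheaf{M}) = 0$. A nondegenerate line bundle of index $0$ is ample --- this is again part of the classical Mumford index theorem (the index is the number of negative eigenvalues of the associated Hermitian/quadratic form, and index $0$ means positive definite, i.e.~ample). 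Hence $\sheaf{M}$ is ample and we have exactly the data required by (3). For the remaining implication (1)$\Leftrightarrow$(3), I would argue that a line bundle of the form $\pi^*(\text{ample})\tensor\xi$ is nef because the pullback of an ample bundle under a surjective morphism is nef and tensoring with a homogeneous (hence numerically trivial) line bundle preserves nefness; conversely, if $\sheaf{L}$ is nef, then writing $\sheaf{L}=\pi^*(\sheaf{M})\tensor\xi$ I note that $\sheaf{M}$ inherits nefness (pushing the relation down, or intersecting with curves in fibres of $\pi$ to see nothing bad happens), and a nondegenerate nef line bundle on an abelian variety is ample, since nondegeneracy rules out the degenerate (properly nef, non-ample) case.

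The main obstacle I expect is pinning down the precise classical input about nondegenerate line bundles: that the index $i(\sheaf{L})$ for nondegenerate $\sheaf{L}$ equals the number of ``negative directions'' of the polarization, so that index $0$ is equivalent to ampleness. In characteristic zero this is Mumford's index theorem via the Appell--Humbert description; in arbitrary characteristic one cites the algebraic version (e.g.~via the Euler characteristic $\chi(\sheaf{L}) = \pm\deg\phi_{\sheaf{L}}^{1/?}$ and the vanishing theorem that cohomology is concentrated in degree $i(\sheaf{L})$). Since the paper has already been invoking ``standard facts'' about nondegenerate line bundles (see the proof of Corollary \ref{cor:index}), I would simply cite the same source and keep this step to one sentence. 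The only genuinely new content is the bookkeeping that the degenerate case reduces cleanly to the nondegenerate one via Proposition \ref{prop:degenerate}, and that the dimension count $\dim K(\sheaf{L}) = \dim Y$ is exactly what makes the index formula collapse to the stated criterion.
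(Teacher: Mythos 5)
Your proposal is correct and follows essentially the same route as the paper: the Kempf decomposition $\sheaf{L}\iso\pi^*(\sheaf{M})\tensor\xi$ plus the index formula of Proposition \ref{prop:degenerate} gives (2)$\iff$(3) via ``nondegenerate of index $0$ $\iff$ ample'', and (1)$\iff$(3) follows from nefness descending along the surjection $\pi$ together with ``nondegenerate and nef $\Rightarrow$ ample''. Your extra check that any decomposition as in (3) forces $Y=K(\sheaf{L})^{\circ}$ is a worthwhile detail the paper leaves implicit, but the argument is otherwise the same.
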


\begin{remark}
In particular, on a simple abelian variety, a nef line bundle is
either ample or algebraically equivalent to $\OO_X$.
\end{remark}

\begin{proof}
  Let $Y\subseteq X$ be the identity component of $K(\sheaf{L})$, with
  reduced structure (thus $Y=0$ if $\sheaf{L}$ is nondegenerate).
  Then, with no assumptions on $\sheaf{L}$, we can write
  \begin{equation*}
    \sheaf{L} \iso \pi^*(\sheaf{M})\tensor\xi
  \end{equation*}
  for a nondegenerate line bundle $\sheaf{M}$ on $X/Y$, by the results
  of Kempf. By Proposition
  \ref{prop:degenerate}, condition (2) holds if and only if $\sheaf{M}$ has
  index zero. Since a nondegenerate line bundle has index zero if and
  only if it is ample, this proves the equivalence of (2) and (3).
  
  Next we prove that (1) implies (3). With $Y$ and $\sheaf{M}$ as
  before, assume $\pi^*(\sheaf{M})\tensor\xi$ is nef. Then
  $\pi^*(\sheaf{M})$ is also nef. But then $\sheaf{M}$ was nef to
  begin with \cite[Example 1.4.4]{lazarsfeld}. Since $\sheaf{M}$ is also
  nondegenerate, it is ample \cite[Corollary 1.5.18]{lazarsfeld}.

  Finally, it is clear that (3) implies (1).
\end{proof}

\section{Semihomogeneous vector bundles}\label{sec:semihom}

Let $\sheaf{E}$ be a simple semihomogeneous vector bundle on $X$. Recall the
following:
\begin{enumerate}
\item There is an isogeny $f\colon Y\to X$ and a line bundle
  $\sheaf{L}$ on $Y$ such that $f^*(\sheaf{E}) \iso \sheaf{L}^{\oplus
    r}$.
\item There is an isogeny $f\colon Y\to X$ and a line bundle
  $\sheaf{L}$ on $Y$ such that $\sheaf{E}\iso f_*(\sheaf{L})$.
\end{enumerate}
In fact, among simple vector bundles, the semihomogeneous ones are
characterized by either of these two properties \cite{mukai78}. In this
section we will use these facts to reduce many questions about
$\sheaf{E}$ to the corresponding questions about its determinant line
bundle $\sheaf{Q}$.

The following definition extends standard terminology for line bundles
on abelian varieties to simple semihomogeneous vector bundles.

\begin{definition}
A simple semihomogeneous vector bundle $\sheaf{E}$ is
\emph{degenerate} if its Euler characteristic $\chi(\sheaf{E})$
is zero. Otherwise it is \emph{nondegenerate}.
\end{definition}

\begin{proposition}\label{prop:det}
Let $\sheaf{E}$ be a simple semihomogeneous vector bundle on $X$, with
determinant line bundle $\sheaf{Q}$.
\begin{enumerate}
\item $\sheaf{E}$ is nondegenerate if and only if $\sheaf{Q}$ is nondegenerate.
\item $\sheaf{E}$ is ample if and only if $\sheaf{Q}$ is ample.
\item $\sheaf{E}$ is nef if and only if $\sheaf{Q}$ is nef.
\end{enumerate}
\end{proposition}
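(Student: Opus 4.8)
The plan is to reduce all three equivalences to the corresponding known facts about \emph{line} bundles, using the structure result~(1) recalled at the start of this section: there is an isogeny $f\colon Y\to X$ and a line bundle $\sheaf{L}$ on $Y$ with $f^*(\sheaf{E})\iso\sheaf{L}^{\oplus r}$. I would fix such an $f$ and $\sheaf{L}$ once and for all, say of degree $d=\deg f$. Since taking determinants commutes with pullback,
\begin{equation*}
f^*(\sheaf{Q})\iso\det\big(f^*(\sheaf{E})\big)\iso\det(\sheaf{L}^{\oplus r})\iso\sheaf{L}^{\otimes r}.
\end{equation*}
Thus both $f^*(\sheaf{E})$ and $f^*(\sheaf{Q})$ are built out of the single line bundle $\sheaf{L}$ on the abelian variety $Y$, and each of the three assertions will follow by observing that the property in question is preserved and reflected by the isogeny $f$, and is unchanged when one passes from $\sheaf{L}$ to $\sheaf{L}^{\oplus r}$ or to $\sheaf{L}^{\otimes r}$.

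For parts~(2) and~(3) I would invoke the standard fact \cite{lazarsfeld} that, for a finite surjective morphism of complete varieties, a vector bundle is ample (respectively nef) if and only if its pullback is. Hence $\sheaf{E}$ is ample iff $f^*(\sheaf{E})=\sheaf{L}^{\oplus r}$ is ample iff $\sheaf{L}$ is ample (a finite direct sum of copies of a bundle being ample precisely when that bundle is); and likewise $\sheaf{Q}$ is ample iff $f^*(\sheaf{Q})=\sheaf{L}^{\otimes r}$ is ample iff $\sheaf{L}$ is ample (a positive tensor power of a line bundle being ample precisely when the line bundle is). Chaining these gives the equivalence in~(2), and the identical argument with ``nef'' in place of ``ample'' gives~(3).

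For part~(1) the relevant principle is that Euler characteristics multiply by the degree under isogeny pullback: $\chi(f^*\sheaf{F})=d\cdot\chi(\sheaf{F})$ for every coherent sheaf $\sheaf{F}$ on $X$. Indeed $f$ is finite, so $\chi(f^*\sheaf{F})=\chi(f_*f^*\sheaf{F})=\chi(\sheaf{F}\tensor f_*\OO_Y)$ by the projection formula, and $f_*\OO_Y$ is a direct sum of $d$ algebraically trivial line bundles, tensoring by any of which leaves $\chi$ unchanged. Applying this to $\sheaf{E}$ and to $\sheaf{Q}$, and using $\chi(\sheaf{L}^{\oplus r})=r\,\chi(\sheaf{L})$ together with $\chi(\sheaf{L}^{\otimes r})=r^{g}\chi(\sheaf{L})$ (Riemann--Roch on the $g$-dimensional abelian variety $Y$, whose Todd class is trivial), I obtain
\begin{equation*}
d\cdot\chi(\sheaf{E})=r\,\chi(\sheaf{L}),\qquad d\cdot\chi(\sheaf{Q})=r^{g}\,\chi(\sheaf{L}).
\end{equation*}
Therefore $\chi(\sheaf{E})=0$ iff $\chi(\sheaf{L})=0$ iff $\chi(\sheaf{Q})=0$, which is exactly the claim, since by definition $\sheaf{E}$ is nondegenerate precisely when $\chi(\sheaf{E})\ne 0$, and the line bundle $\sheaf{Q}$ is nondegenerate precisely when $\chi(\sheaf{Q})\ne 0$.

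I do not expect a genuine obstacle: the only non-elementary inputs are the structure theorem~(1) and the finite-pullback criteria for ampleness and nefness of vector bundles, all quoted from the literature. The one place that requires a little care is the Euler-characteristic bookkeeping in part~(1), in particular checking that $\chi(\sheaf{L}^{\otimes r})$ and $\chi(\sheaf{L}^{\oplus r})$ vanish exactly when $\chi(\sheaf{L})$ does; but this is immediate from Riemann--Roch on abelian varieties.
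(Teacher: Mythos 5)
Your proof of parts (2) and (3) is exactly the paper's: pull back along an isogeny $f$ with $f^*(\sheaf{E})\iso\sheaf{L}^{\oplus r}$, note $f^*(\sheaf{Q})\iso\sheaf{L}^{r}$, and use that ampleness and nefness are preserved and reflected by finite surjective maps. For part (1) the paper simply quotes Mukai's formula $\chi(\sheaf{E})=\chi(\sheaf{Q})/r^{g-1}$, whereas you rederive it (in the equivalent form $d\,\chi(\sheaf{E})=r\,\chi(\sheaf{L})$, $d\,\chi(\sheaf{Q})=r^{g}\chi(\sheaf{L})$) from the same isogeny; that is a legitimate and more self-contained route, and the bookkeeping is right. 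The only point to tighten is your justification of $\chi(f^*\sheaf{F})=d\cdot\chi(\sheaf{F})$: the decomposition of $f_*\OO_Y$ as a direct sum of $d$ algebraically trivial line bundles holds only for separable (\'etale) isogenies, and the paper works in arbitrary characteristic, where the isogeny produced by Mukai's structure theorem need not be separable. The multiplicativity of $\chi$ under isogeny pullback is still true in general (e.g.\ by Riemann--Roch, since $f^*$ multiplies the top intersection number by $\deg f$, or because $f_*\OO_Y$ always admits a filtration with graded pieces of Euler characteristic zero twists), so you should either cite that standard fact directly or argue via Riemann--Roch rather than via the direct-sum decomposition.
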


\begin{proof}
Part (1) follows from Mukai's formula \cite{mukai78}
\begin{equation*}
\chi(\sheaf{E}) = \chi(\sheaf{Q})/r^{g-1}
\end{equation*}
for the Euler characteristic of $\sheaf{E}$, where $r$ is its rank.

To prove (2) and (3), choose an isogeny $f\colon Y\to X$ such that
$f^*(\sheaf{E})\iso\sheaf{L}^{\oplus r}$ for a line bundle $\sheaf{L}$
on $Y$. Then $f^*(\sheaf{Q})\iso\sheaf{L}^r$. Since $f$ is a finite
map, a vector bundle on $X$ is ample (nef) if and only if its
pullback to $Y$ is. Thus we have
\begin{align*}
\sheaf{E} \text{ ample (nef)} &\iff \sheaf{L}^{\oplus r} \text{ ample (nef)}\\
\sheaf{Q} \text{ ample (nef)} &\iff \sheaf{L}^r \text{ ample (nef)}
\end{align*}
and the two statements on the right are both equivalent to the
ampleness (nefness) of $\sheaf{L}$.
\end{proof}

\begin{proposition}\label{prop:wit}
  Let $\sheaf{E}$ be a simple semihomogeneous vector bundle. Then
  $\sheaf{E}$ satisfies WIT, and its index equals the index of its
  determinant line bundle. If $\sheaf{E}$ is nondegenerate, then it
  satisfies IT.
\end{proposition}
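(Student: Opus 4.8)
The plan is to exploit the two structural descriptions of a simple semihomogeneous bundle $\sheaf{E}$ recalled at the start of this section and reduce everything to the line bundle case already handled in Corollary~\ref{cor:index} and Corollary~\ref{cor:nef}. First I would pick an isogeny $f\colon Y\to X$ together with a line bundle $\sheaf{L}$ on $Y$ such that $\sheaf{E}\iso f_*(\sheaf{L})$, using fact (2). The key point is that the Fourier-Mukai functor $\Phi$ on $X$ (with Poincar\'e kernel) interacts with pushforward along $f$ in a controlled way: one has $\abdual{f}\colon\abdual{X}\to\abdual{Y}$ and a compatibility $\Phi_X\circ f_* \iso L\abdual{f}^{*}\circ\Phi_Y$ (the dual of the formula $\Phi\circ L\phi^*[d]\iso R\abdual{\phi}_*\circ\Phi$ used in the proof of Proposition~\ref{prop:degenerate}, applied to the isogeny $f$, for which $d=0$ and $\abdual{f}$ is again an isogeny, hence flat, so $L\abdual{f}^{*}=\abdual{f}^{*}$). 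Since $f$ is finite, $f_*$ is exact, so applying this to the sheaf $\sheaf{L}$ gives $\Phi^i_X(\sheaf{E})\iso \abdual{f}^{*}(\Phi^i_Y(\sheaf{L}))$ for every $i$.

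Now $\sheaf{L}$ satisfies WIT on $Y$ by Corollary~\ref{cor:index}, with some index $i(\sheaf{L})$; and $\abdual{f}^{*}$, being pullback along an isogeny, is faithful and exact on coherent sheaves, so it kills exactly those cohomology sheaves that already vanished. Hence $\Phi^i_X(\sheaf{E})=0$ for $i\ne i(\sheaf{L})$, which shows $\sheaf{E}$ satisfies WIT with $i(\sheaf{E})=i(\sheaf{L})$. It remains to identify $i(\sheaf{L})$ with the index $i(\sheaf{Q})$ of the determinant $\sheaf{Q}$ of $\sheaf{E}$. For this I would instead use fact (1): choose an isogeny (possibly a different one) $g\colon Z\to X$ with $g^{*}(\sheaf{E})\iso\sheaf{N}^{\oplus r}$, so that $g^{*}(\sheaf{Q})\iso\sheaf{N}^{r}$. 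Pulling back along an isogeny preserves the index (this is the content, via the same $\Phi\circ g^{*}[d']\iso R\abdual{g}_*\circ\Phi$ formula with $d'=0$, of the statement that $\Phi^i(g^{*}(-))\iso\abdual{g}_*(\Phi^i(-))$ and $\abdual{g}_*$ is exact and faithful), so $i(g^{*}\sheaf{E})=i(\sheaf{E})$ and $i(g^{*}\sheaf{Q})=i(\sheaf{Q})$. But $g^{*}\sheaf{E}\iso\sheaf{N}^{\oplus r}$ forces $i(g^{*}\sheaf{E})=i(\sheaf{N})$, and $g^{*}\sheaf{Q}\iso\sheaf{N}^{r}$ together with Corollary~\ref{cor:index}(3) gives $i(g^{*}\sheaf{Q})=i(\sheaf{N})$ as well. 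Chaining these equalities yields $i(\sheaf{E})=i(\sheaf{Q})$.

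Finally, for the last sentence: if $\sheaf{E}$ is nondegenerate then $\sheaf{Q}$ is nondegenerate by Proposition~\ref{prop:det}(1), hence $\sheaf{Q}$ satisfies IT. Then in the presentation $\sheaf{E}\iso f_*(\sheaf{L})$ the line bundle $\sheaf{L}$ has $f^{*}(\sheaf{Q})\iso\sheaf{L}^{\deg f}$ nondegenerate (again by the isogeny-invariance of nondegeneracy, e.g.\ via the Euler characteristic being multiplicative), so $\sheaf{L}$ is nondegenerate and satisfies IT on $Y$; IT is preserved under $f_*$ because $H^i(X,\sheaf{E}\tensor\xi)=H^i(X,f_*(\sheaf{L})\tensor\xi)=H^i(X,f_*(\sheaf{L}\tensor f^{*}\xi))=H^i(Y,\sheaf{L}\tensor f^{*}\xi)$ for every $\xi\in\abdual{X}$, by the projection formula and finiteness of $f$. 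Thus $\sheaf{E}$ satisfies IT.

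The main obstacle I anticipate is bookkeeping with the two different isogenies coming from facts (1) and (2) and making sure the index-invariance statement under pullback/pushforward along an isogeny is applied in the correct direction each time; the underlying input — the commutation of $\Phi$ with $\abdual{\phi}_*$ resp.\ $\abdual{\phi}^{*}$ for an isogeny $\phi$, and the exactness/faithfulness of these functors — is routine once set up, but one must be careful that the Poincar\'e kernels on $X$ and on the auxiliary variety are matched correctly, exactly as in the proof of Proposition~\ref{prop:degenerate}.
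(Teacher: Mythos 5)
Your argument is correct, and its skeleton matches the paper's: both prove WIT by writing $\sheaf{E}\iso f_*(\sheaf{L})$ and transporting Corollary~\ref{cor:index} through the compatibility $\Phi^i(f_*(\sheaf{L}))\iso\abdual{f}^*(\Phi^i(\sheaf{L}))$, and both obtain IT in the nondegenerate case from the projection formula $H^i(X,\sheaf{E}\tensor\xi)\iso H^i(Y,\sheaf{L}\tensor f^*\xi)$. Where you genuinely diverge is the identification $i(\sheaf{L})=i(\sheaf{Q})$: the paper stays with the single isogeny $f$, invokes Mukai's Lemma~6.21 in the form $f^*(\sheaf{Q})\sim\sheaf{L}^{d}$ (algebraic equivalence, $d=\deg f$), and then combines parts (2) and (3) of Corollary~\ref{cor:index} with $\Phi^i(f^*(\sheaf{Q}))\iso\abdual{f}_*(\Phi^i(\sheaf{Q}))$; you instead introduce a second isogeny $g$ with $g^*(\sheaf{E})\iso\sheaf{N}^{\oplus r}$ and compare both $i(\sheaf{E})$ and $i(\sheaf{Q})$ to $i(\sheaf{N})$. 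Both routes rest on the same pullback compatibility and the faithful exactness of $\abdual{g}_*$; yours trades the determinant lemma for the second structure theorem, at the cost of bookkeeping with two isogenies. One small correction: in your IT paragraph, $f^*(\sheaf{Q})\iso\sheaf{L}^{\deg f}$ should only be an algebraic equivalence --- this \emph{is} Mukai's Lemma~6.21, so you end up relying on it after all. You could avoid it entirely by noting $\chi(\sheaf{L})=\chi(f_*(\sheaf{L}))=\chi(\sheaf{E})\ne 0$ (Leray for the finite map $f$), which gives nondegeneracy of $\sheaf{L}$ directly.
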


\begin{proof}
Let 
\begin{equation*}
f\colon Y\to X
\end{equation*}
be an isogeny such that $f_*(\sheaf{L})\iso\sheaf{E}$ for some line
bundle $\sheaf{L}$ on $Y$. Then $\sheaf{L}$ satisfies
WIT by Corollary \ref{cor:index}. Furthermore we have
\cite[Section 3]{mukai81}
\begin{equation*}
\Phi^i(f_*(\sheaf{L})) \iso \left.\abdual{f}\right.^*(\Phi^i(\sheaf{L})),
\end{equation*}
which implies that $\sheaf{E}$ satisfies WIT, and its index equals the
index of $\sheaf{L}$.

Next we show that the index of $\sheaf{L}$ equals the index of the
determinant $\sheaf{Q}$ of $\sheaf{E}$.
Firstly, we have \cite[Lemma
  6.21]{mukai78}
\begin{equation}\label{eq:detpullback}
f^*(\sheaf{Q}) \sim \sheaf{L}^d\quad\text{(algebraic equivalence)}
\end{equation}
where $d$ is the degree of $f$. By Corollary \ref{cor:index}, it follows
that $\sheaf{L}$ and $f^*(\sheaf{Q})$ have the same
index. Moreover \cite[Section 3]{mukai81}, there are isomorphisms
\begin{equation*}
\Phi^i(f^*(\sheaf{Q})) \iso \abdual{f}_*(\Phi^i(\sheaf{Q}))
\end{equation*}
for all $i$, so $f^*(\sheaf{Q})$ has the same index as
$\sheaf{Q}$. This proves the first part.

Since the dual map $\abdual{f}$ is again
an isogeny, we have that every $\zeta\in\abdual{Y}$ is a pullback
$f^*(\xi)$ of some element $\xi\in\abdual{X}$. By the projection
formula,
\begin{equation*}
H^i(Y, \sheaf{L}\tensor\zeta)
\iso
H^i(X, f_*(\sheaf{L}\tensor\zeta))
\iso
H^i(X, \sheaf{E}\tensor\xi).
\end{equation*}
Thus $\sheaf{L}$ satisfies IT if and only if $\sheaf{E}$
does. But, if $\sheaf{E}$ is nondegenerate, then so is $\sheaf{Q}$ by
Proposition \ref{prop:det}, and then \eqref{eq:detpullback} shows that
\begin{equation*}
d^g\chi(\sheaf{L}) = \chi(\sheaf{L}^d) = d \chi(\sheaf{Q}) \ne 0,
\end{equation*}
so $\sheaf{L}$ is nondegenerate also. Thus $\sheaf{L}$ satisfies IT. By
what we just said, this proves that $\sheaf{E}$ satisfies IT.
\end{proof}

\section{Fourier-Mukai transforms of negative line
bundles}\label{sec:negative}

\begin{lemma}\label{lem:push}
Let $Y$ be a fine moduli space of simple semihomogeneous vector bundles on
$X$ and let $\sheaf{E}$ be a fixed universal family on $X\times Y$.
\begin{enumerate}
\item There exists an isogeny $f\colon\abdual{X}\to Y$, that sends
  an element $\sheaf{\xi}\in\abdual{X}$ to the bundle
  $\res{\sheaf{E}}{X\times\{0\}}\tensor\xi$.
\item For every $i$, the sheaf $R^ip_{2*}(\sheaf{E})$ is zero if and
  only if $\Phi^i(\res{\sheaf{E}}{X\times\{0\}})$ is zero.
\end{enumerate}
\end{lemma}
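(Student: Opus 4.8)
The plan is to produce $f$ from the universal property of the fine moduli space $Y$, and to prove (2) by pulling everything back along $f$ and using faithful flatness. Write $\sheaf{E}_0 = \res{\sheaf{E}}{X\times\{0\}}$; it is a simple semihomogeneous vector bundle on $X$, being a fibre of $\sheaf{E}$, and recall from Sections \ref{sec:moduli}--\ref{sec:derived} that $Y$ is an abelian variety of dimension $g$. Let $q_X,q_{\abdual{X}}$ be the projections from $X\times\abdual{X}$ and let $\sheaf{P}$ denote the Poincar\'e bundle there. The vector bundle $\sheaf{F} := q_X^*(\sheaf{E}_0)\tensor\sheaf{P}$ on $X\times\abdual{X}$ is flat over $\abdual{X}$, and its fibre over a point $\xi$ is $\sheaf{E}_0\tensor\xi$, which is again simple and semihomogeneous; hence $\sheaf{F}$ induces a morphism $f\colon\abdual{X}\to Y$ with $f(\xi)=[\sheaf{E}_0\tensor\xi]$ on closed points. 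Since $f(0)=[\sheaf{E}_0]=0$ and every morphism of abelian varieties is the composite of a homomorphism and a translation, $f$ is a homomorphism. Its kernel is the set of $\xi$ with $\sheaf{E}_0\tensor\xi\iso\sheaf{E}_0$, which is finite --- it is contained in $\abdual{X}_r$, $r$ the rank of $\sheaf{E}_0$, as noted in Section \ref{sec:terminology}. A homomorphism of abelian varieties of equal dimension with finite kernel is an isogeny, so (1) follows.

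For (2), since $f$ is flat and $p_2\colon X\times Y\to Y$ is proper, flat base change applied to the Cartesian square with top-left corner $X\times\abdual{X}$, left vertical map $q_{\abdual{X}}$, top horizontal map $\mathrm{id}_X\times f$, and bottom/right maps $f$ and $p_2$, yields
\begin{equation*}
f^*\!\bigl(R^ip_{2*}(\sheaf{E})\bigr)\iso R^iq_{\abdual{X}*}\bigl((\mathrm{id}_X\times f)^*\sheaf{E}\bigr)
\end{equation*}
for every $i$. The family $\sheaf{F}=q_X^*(\sheaf{E}_0)\tensor\sheaf{P}$ from the previous paragraph has $f$ as its classifying morphism, so it agrees with $(\mathrm{id}_X\times f)^*\sheaf{E}$ up to tensoring by a line bundle pulled back from $\abdual{X}$; that is, $(\mathrm{id}_X\times f)^*\sheaf{E}\iso q_X^*(\sheaf{E}_0)\tensor\sheaf{P}\tensor q_{\abdual{X}}^*(\sheaf{N})$ for some line bundle $\sheaf{N}$ on $\abdual{X}$. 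Substituting this into the displayed isomorphism and using the projection formula together with the definition of $\Phi$ (the Fourier--Mukai functor for the Poincar\'e bundle on $X\times\abdual{X}$), we obtain $f^*\!\bigl(R^ip_{2*}(\sheaf{E})\bigr)\iso\Phi^i(\sheaf{E}_0)\tensor\sheaf{N}$.

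To conclude, note that $f$, being an isogeny, is faithfully flat, so a coherent sheaf on $Y$ vanishes if and only if its pullback to $\abdual{X}$ vanishes; since tensoring by the line bundle $\sheaf{N}$ does not affect vanishing, the isomorphism above gives $R^ip_{2*}(\sheaf{E})=0$ if and only if $\Phi^i(\sheaf{E}_0)=0$, which is (2). I expect the only delicate point to be the identification of $(\mathrm{id}_X\times f)^*\sheaf{E}$: one must check that $q_X^*(\sheaf{E}_0)\tensor\sheaf{P}$ and $(\mathrm{id}_X\times f)^*\sheaf{E}$ are two flat families on $X\times\abdual{X}$ with the same classifying morphism $f$ (they agree on every closed fibre and $Y$ is separated) and then invoke the standard fact that a universal family on a fine moduli space is unique up to tensoring with a line bundle from the parameter space. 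The remaining ingredients --- flat base change, the projection formula, and faithfully flat descent of the property of being the zero sheaf --- are routine.
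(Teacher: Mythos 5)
Your proof is correct and follows essentially the same route as the paper: the map $f$ is induced by the family $q_X^*(\sheaf{E}_0)\tensor\sheaf{P}$ on $X\times\abdual{X}$, and part (2) is obtained from flat base change, the identification of $(\mathrm{id}_X\times f)^*\sheaf{E}$ with that family up to a line bundle pulled back from $\abdual{X}$, the projection formula, and faithful flatness of $f$. The only difference is that you spell out some details the paper leaves implicit (that $f$ is a homomorphism with finite kernel, hence an isogeny, and the uniqueness of the universal family up to a twist from the base), which is fine.
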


\begin{proof}
The bundle
\begin{equation*}
\sheaf{P}\tensor p_1^*(\res{\sheaf{E}}{X\times\{0\}})
\end{equation*}
on $X\times\abdual{X}$, considered as a family over $\abdual{X}$ of simple
semihomogeneous bundles, induces a map $f\colon \abdual{X}\to Y$. This
is the map required in the first part.

Moreover, there exists a line bundle
$\sheaf{L}$ on $\abdual{X}$ such that
\begin{equation*}
(1_X\times f)^*(\sheaf{E}) \iso
p_2^*(\sheaf{L})\tensor\sheaf{P}\tensor p_1^*(\res{\sheaf{E}}{X\times\{0\}}).
\end{equation*}
Hence, using flat base change for higher push forward and the
projection formula, we get
\begin{align*}
f^*(R^ip_{2*}(\sheaf{E}))
&\iso
R^ip_{2*}((1_X\times f)^*(\sheaf{E}))\\
&\iso
R^ip_{2*}(p_2^*(\sheaf{L})\tensor\sheaf{P}\tensor
p_1^*(\res{\sheaf{E}}{X\times\{0\}}))\\
&\iso
\sheaf{L}\tensor R^ip_{2*}(p_1^*(\res{\sheaf{E}}{X\times\{0\}})\tensor\sheaf{P})\\
&=
\sheaf{L}\tensor \Phi^i(\res{\sheaf{E}}{X\times\{0\}}).
\end{align*}
The second part of the lemma follows.
\end{proof}

\begin{theorem}\label{thm:index}
  Let $Y$ be a fine moduli space of simple semihomogeneous vector
  bundles on $X$, and let $\sheaf{E}$ be a fixed universal family on
  $X\times Y$. Also let $\sheaf{L}$ be an ample line bundle on $Y$.
  There exists
  an integer $n_0$ such that for all $n\ge n_0$,
  \begin{enumerate}
  \item the line bundle $\sheaf{L}^{-n}$ satisfies the index theorem with
    respect to $\sheaf{E}$, and its index is $g$;
  \item the Fourier-Mukai transform $\sheaf{G}_n =
    \Psi^g_{\sheaf{E}}(\sheaf{L}^{-n})$ with respect to $\sheaf{E}$ is
    nondegenerate and its index is
    \begin{equation*}
      i(\sheaf{G}_n) = i(\sheaf{Q}) - \dim K(\sheaf{Q}) 
    \end{equation*}
    where $\sheaf{Q}$ is the determinant line bundle of any of the
    bundles $\res{\sheaf{E}}{X\times\{y\}}$ parametrized by $Y$.
  \end{enumerate}
\end{theorem}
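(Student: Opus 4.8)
Part (1) is immediate from Example~\ref{ex:IT}: since $\dim Y=g$ by Proposition~\ref{prop:semihom}, for $n\gg0$ the line bundle $\sheaf{L}^{-n}$ satisfies IT with respect to $\sheaf{E}$ of index $g$, so $\sheaf{G}_n=\Psi^g_{\sheaf{E}}(\sheaf{L}^{-n})$ is its Fourier--Mukai transform, a simple semihomogeneous vector bundle by Corollary~\ref{cor:stable}. For part (2), set $\sheaf{Q}_n=\det\sheaf{G}_n$. By Proposition~\ref{prop:det}(1) the bundle $\sheaf{G}_n$ is nondegenerate if and only if $\sheaf{Q}_n$ is, and by Proposition~\ref{prop:wit} we have $i(\sheaf{G}_n)=i(\sheaf{Q}_n)$; moreover $i(\sheaf{Q}_n)$ and the (non)degeneracy of $\sheaf{Q}_n$ depend only on the class of $\sheaf{Q}_n$ in $\NS(X)_{\QQ}$ (Corollary~\ref{cor:index} and Proposition~\ref{prop:degenerate}). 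So the whole statement reduces to controlling this class.

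The idea is to descend to $\abdual{X}$ through the isogeny $f\colon\abdual{X}\to Y$ of Lemma~\ref{lem:push}. With $\sheaf{E}_0=\res{\sheaf{E}}{X\times\{0\}}$ and $\sheaf{N}$ the line bundle on $\abdual{X}$ appearing in the proof of that lemma, so that $(1_X\times f)^*\sheaf{E}\iso p_2^*(\sheaf{N})\tensor\sheaf{P}\tensor p_1^*(\sheaf{E}_0)$, the projection formula together with flat base change gives, for every $\sheaf{M}\in D(\abdual{X})$,
\begin{equation*}
\Psi_{\sheaf{E}}(f_*\sheaf{M})\iso\Psi_{(1_X\times f)^*\sheaf{E}}(\sheaf{M})\iso\sheaf{E}_0\tensor\Psi(\sheaf{M}\tensor\sheaf{N}),
\end{equation*}
where $\Psi$ on the right is the Poincar\'e functor $D(\abdual{X})\to D(X)$. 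Apply this with $\sheaf{M}=f^*(\sheaf{L}^{-n})$: then $f_*\sheaf{M}\iso\sheaf{L}^{-n}\tensor f_*\OO_{\abdual{X}}$, while $\sheaf{A}_n:=\sheaf{M}\tensor\sheaf{N}=f^*(\sheaf{L}^{-n})\tensor\sheaf{N}$ is, for $n\gg0$, an anti-ample (hence nondegenerate, of index $g$) line bundle on $\abdual{X}$, because $f^*\sheaf{L}$ is ample. Writing $\sheaf{B}_n=\Psi^g(\sheaf{A}_n)$ for its Fourier--Mukai transform, the displayed isomorphism shows that $\sheaf{L}^{-n}\tensor f_*\OO_{\abdual{X}}$ satisfies WIT of index $g$ with $\Psi^g_{\sheaf{E}}(\sheaf{L}^{-n}\tensor f_*\OO_{\abdual{X}})\iso\sheaf{E}_0\tensor\sheaf{B}_n$. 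Now $f$ is finite flat with trivial relative dualizing sheaf, so $\operatorname{ch}(f_*\OO_{\abdual{X}})=\deg f$; since all Todd classes in sight are trivial, Grothendieck--Riemann--Roch turns this into $\operatorname{ch}(\sheaf{E}_0\tensor\sheaf{B}_n)=(\deg f)\operatorname{ch}(\sheaf{G}_n)$, and in degree one
\begin{equation*}
(\deg f)\,c_1(\sheaf{Q}_n)=(\operatorname{rk}\sheaf{B}_n)\,c_1(\sheaf{Q})+r\,c_1(\sheaf{B}_n)\qquad\text{in }\NS(X)_{\QQ},
\end{equation*}
with $r=\operatorname{rk}\sheaf{E}$ and $\sheaf{Q}=\det\sheaf{E}_0$.

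Next I would pull this back along the isogeny $\phi_{\sheaf{A}_n}\colon\abdual{X}\to X$ and expand in $n$, using that pullback along an isogeny leaves the index and $\dim K(-)$ unchanged, as does a positive rational rescaling. By Mukai's description of the transform of a nondegenerate line bundle \cite{mukai81}, $\phi_{\sheaf{A}_n}^*c_1(\sheaf{B}_n)=-(\operatorname{rk}\sheaf{B}_n)\,c_1(\sheaf{A}_n)$; hence $\phi_{\sheaf{A}_n}^*c_1(\sheaf{Q}_n)$ equals, up to the positive factor $\operatorname{rk}(\sheaf{B}_n)/\deg f$, the class $\Gamma_n:=\phi_{\sheaf{A}_n}^*c_1(\sheaf{Q})-r\,c_1(\sheaf{A}_n)$. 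Writing $\psi=\phi_{f^*\sheaf{L}}$ and $\psi'=\phi_{\sheaf{N}}$ (symmetric isogenies $\abdual{X}\to X$), we have $\phi_{\sheaf{A}_n}=-n\psi+\psi'$, so $\Gamma_n=n^2\gamma_0+n\gamma_1+\gamma_2$ with
\begin{equation*}
\gamma_0=\psi^*c_1(\sheaf{Q}),\qquad
\gamma_1=r\,f^*c_1(\sheaf{L})-\beta,\qquad
\gamma_2=(\psi')^{*}c_1(\sheaf{Q})-r\,c_1(\sheaf{N}),
\end{equation*}
where $\beta=(\psi+\psi')^*c_1(\sheaf{Q})-\psi^*c_1(\sheaf{Q})-(\psi')^{*}c_1(\sheaf{Q})$ is the bilinear cross term. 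Since $\psi$ is an isogeny, $i(\gamma_0)=i(\sheaf{Q})$ and $N:=(\Ker\phi_{\gamma_0})^{0}$ has $\dim N=\dim K(\sheaf{Q})$. The decisive observation is that $\res{\beta}{N}=0$: as $\phi_{\gamma_0}=\psi\circ\phi_{\sheaf{Q}}\circ\psi$, the isogeny $\psi$ carries $N$ into $K(\sheaf{Q})^{0}=(\Ker\phi_{\sheaf{Q}})^{0}$, on which the bilinear form attached to $c_1(\sheaf{Q})$ vanishes. Consequently $\res{\gamma_1}{N}=r\res{f^*c_1(\sheaf{L})}{N}$ is \emph{ample}.

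Finally, choose a complement $B$ of $N$ in $\abdual{X}$ orthogonal with respect to $\gamma_1$ (possible since $\res{\gamma_1}{N}$ is nondegenerate); because $N\subseteq\Ker\phi_{\gamma_0}$, the class $\gamma_0$ has no mixed K\"unneth term for the resulting isogeny $N\times B\to\abdual{X}$, so $\gamma_0$ pulls back to $p_B^*(\res{\gamma_0}{B})$ with $\res{\gamma_0}{B}$ nondegenerate of index $i(\gamma_0)-\dim N=i(\sheaf{Q})-\dim K(\sheaf{Q})$. Pulling $\Gamma_n$ back to $N\times B$: for $n\gg0$ the $N$-block $n\res{\gamma_1}{N}+\res{\gamma_2}{N}$ is ample, the $B$-block $n^2\res{\gamma_0}{B}+n\res{\gamma_1}{B}+\res{\gamma_2}{B}$ is nondegenerate of index $i(\sheaf{Q})-\dim K(\sheaf{Q})$, and the remaining (off-diagonal) contribution comes only from $\gamma_2$ and is of strictly lower order in $n$; hence $\Gamma_n$ is nondegenerate with $i(\Gamma_n)=i(\sheaf{Q})-\dim K(\sheaf{Q})$. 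Therefore $\sheaf{Q}_n$, and so $\sheaf{G}_n$, is nondegenerate, and $i(\sheaf{G}_n)=i(\sheaf{Q}_n)=i(\sheaf{Q})-\dim K(\sheaf{Q})$. The step I expect to demand the most care is this last perturbation argument --- that for $n\gg0$ the index of $n^2\gamma_0+n\gamma_1+\gamma_2$ is exactly $i(\gamma_0)-\dim N$ once $\res{\gamma_1}{N}$ is ample --- which over $\CC$ is transparent from the signatures of Riemann forms but in positive characteristic needs the characteristic-free description of the index of a nondegenerate class; the bookkeeping with Grothendieck--Riemann--Roch and with Mukai's formula for $\sheaf{B}_n$ is routine by comparison.
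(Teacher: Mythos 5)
Your overall strategy is genuinely different from the paper's, and it contains a gap at the decisive final step. The paper never touches Chern classes: it observes that $\Phi_{\dual{\sheaf{E}}}$ and $\Psi_{\sheaf{E}}(-)[g]$ are quasi-inverse, that Lemma~\ref{lem:push} applied to $\dual{\sheaf{E}}$ together with Proposition~\ref{prop:wit} gives $\Phi_{\dual{\sheaf{E}}}(\OO_X)\iso\sheaf{F}[-i_0]$ with $i_0=g+\dim K(\sheaf{Q})-i(\sheaf{Q})$, and then computes
$H^p(X,\sheaf{G}_n)\iso\Hom_{D(Y)}(\sheaf{F}[-i_0],\sheaf{L}^{-n}[p])\iso\dual{H^{g-p-i_0}(Y,\sheaf{F}\tensor\sheaf{L}^n)}$ by adjunction and Serre duality; Serre vanishing on $Y$ then shows $\chi(\sheaf{G}_n)\ne 0$ and pins down the index, with no case analysis and no dependence on the characteristic. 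Your route instead reduces to $\det\sheaf{G}_n$ via Propositions~\ref{prop:det} and~\ref{prop:wit}, descends along the isogeny of Lemma~\ref{lem:push}, and computes the class of $\det\sheaf{G}_n$ in $\NS(X)_{\QQ}$ by Grothendieck--Riemann--Roch and Mukai's formula. That bookkeeping is essentially correct (including the nontrivial observation that $\res{\beta}{N}=0$, which does follow from the symmetry of $\phi_{\sheaf{Q}}$ once one knows $\phi_{\sheaf{Q}}\circ\psi$ kills the connected group $N$; a small slip: $\psi'=\phi_{\sheaf{N}}$ need not be an isogeny, though this is harmless since it only enters lower-order terms).

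The gap is the one you flag yourself: the claim that for $n\gg 0$ the nondegenerate class $n^2\gamma_0+n\gamma_1+\gamma_2$ has index exactly $i(\gamma_0)-\dim N$, given that $\gamma_0$ is ``diagonal'' with $\res{\gamma_0}{B}$ nondegenerate of that index and $\res{\gamma_1}{N}$ ample. Over $\CC$ this is a signature computation for the associated Hermitian forms, where the $O(1)$ off-diagonal block coming from $\gamma_2$ is absorbed because the diagonal eigenvalues grow at least linearly in $n$; but the paper works over an algebraically closed field of arbitrary characteristic, where the index is defined purely cohomologically and no such eigenvalue-perturbation argument is available. To close this you would need a characteristic-free mechanism --- e.g.\ Mumford's description of $i(\sheaf{L})$ as the number of positive roots of the polynomial $t\mapsto\chi(\sheaf{L}\tensor\sheaf{M}^t)$ for an auxiliary ample $\sheaf{M}$, or a lifting argument --- and making the two-parameter family $n^2\gamma_0+n\gamma_1+\gamma_2$ interact correctly with such a criterion is precisely the hard part, not routine bookkeeping. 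As written, the proof establishes the theorem only over $\CC$, and even there the last step is asserted rather than proved; the paper's adjunction argument avoids the difficulty entirely.
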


\begin{proof}
  The first part was established in Example \ref{ex:IT}.
  For the second part, we make use of the Fourier-Mukai equivalence
  induced by $\sheaf{E}$. As $\sheaf{L}^{-n}$ has index $g$, we have
  \begin{equation}\label{eq:fm-trans1}
    \Psi_{\sheaf{E}}(\sheaf{L}^{-n})[g] \iso \sheaf{G}_n
  \end{equation}
  (where, as usual, the bundle on the right hand side is considered as
  a complex concentrated in degree zero). Furthermore, by Proposition
  \ref{prop:wit}, the semihomogeneous vector bundle
  $\dualres{\sheaf{E}}{X\times\{0\}}$ satisfies WIT with index equal
  to the index of $\dual{\sheaf{Q}}$, which is
  \begin{equation*}
    i_0 = g + \dim K(\sheaf{Q}) - i(\sheaf{Q})
  \end{equation*}
  by Corollary \ref{cor:index}. Now apply Lemma \ref{lem:push} to
  $\dual{\sheaf{E}}$ to conclude that $\Phi^i_{\dual{\sheaf{E}}}(\OO_X)
  = R^ip_{2*}(\dual{\sheaf{E}})$ vanishes for all $i$ except $i_0$. In other words,
  \begin{equation}\label{eq:fm-trans2}
    \Phi_{\dual{\sheaf{E}}}(\OO_X)
    \iso \sheaf{F}[-i_0]
  \end{equation}
  for some coherent sheaf $\sheaf{F}$. Since
  $\Phi_{\dual{\sheaf{E}}}(-)$ and $\Psi_{\sheaf{E}}(-)[g]$ are
  quasi-inverse functors \cite[Proposition 5.9]{huybrechts}, the isomorphisms \eqref{eq:fm-trans1} and
  \eqref{eq:fm-trans2} give
  \begin{align*}
    H^p(X, \sheaf{G}_n)
    &\iso \Ext^p_X(\OO_X, \sheaf{G}_n)\\
    &\iso \Hom_{D(X)}(\OO_X, \sheaf{G}_n[p])\\
    &\iso \Hom_{D(Y)}(\Phi_{\dual{\sheaf{E}}}(\OO_X), \sheaf{L}^{-n}[p])\\
    &\iso \Hom_{D(Y)}(\sheaf{F}[-i_0], \sheaf{L}^{-n}[p])\\
    &\iso \Ext^{p+i_0}_Y(\sheaf{F},
    \sheaf{L}^{-n})\\
    &\iso \dual{H^{g-p-i_0}(Y,
    \sheaf{F}\tensor\sheaf{L}^n)},
  \end{align*}
  using Serre duality in the last step. If $n$ is sufficiently large,
  the cohomology group in the last line vanishes if and only if
$p$ differs from
  \begin{equation*}
    g-i_0 = i(\sheaf{Q})-\dim K(\sheaf{Q}).
  \end{equation*}
  Thus we have proved that $H^p(X, \sheaf{G}_n)$ is nonzero if and
  only if $p$ has this value. On the one hand, this shows that
  \emph{if} $\sheaf{G}_n$ satisfies IT, then this $p$ is its index. On the
  other hand, it also shows that $\sheaf{G}_n$ is nondegenerate, so it
  satisfies IT by Proposition \ref{prop:wit}, and we are done.
\end{proof}

The second part of Theorem \ref{thm:main} follows:

\begin{corollary}\label{cor:ample}
  The vector bundle $\sheaf{G}_n$ is ample for $n$ sufficiently large if and
  only if the bundles $\res{\sheaf{E}}{X\times\{y\}}$ parametrized by
  $Y$ are nef (equivalently, have nef determinant).
\end{corollary}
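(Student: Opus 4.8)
\emph{Plan.} The strategy is to chain together the index computation of Theorem~\ref{thm:index} with the dictionary between positivity and index established in Sections~\ref{sec:index} and~\ref{sec:semihom}. Fix $n$ large enough that both parts of Theorem~\ref{thm:index} apply, so that $\sheaf{G}_n$ is a nondegenerate sheaf satisfying IT with
\begin{equation*}
i(\sheaf{G}_n) = i(\sheaf{Q}) - \dim K(\sheaf{Q}),
\end{equation*}
and also, by Corollary~\ref{cor:stable}, a simple semihomogeneous vector bundle.

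\emph{Step 1: reduce ampleness of $\sheaf{G}_n$ to vanishing of its index.} By Proposition~\ref{prop:det}(2), $\sheaf{G}_n$ is ample if and only if its determinant line bundle $\det\sheaf{G}_n$ is ample. Since $\sheaf{G}_n$ is nondegenerate, Proposition~\ref{prop:det}(1) shows $\det\sheaf{G}_n$ is a nondegenerate line bundle, and a nondegenerate line bundle is ample if and only if it has index zero (this is the classical index theorem on abelian varieties, already invoked in the proof of Corollary~\ref{cor:nef}). Finally, by Proposition~\ref{prop:wit}, $i(\det\sheaf{G}_n) = i(\sheaf{G}_n)$. Hence $\sheaf{G}_n$ is ample if and only if $i(\sheaf{G}_n) = 0$.

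\emph{Step 2: translate back to $\sheaf{E}$.} Combining Step~1 with the formula for $i(\sheaf{G}_n)$, we see that $\sheaf{G}_n$ is ample if and only if $i(\sheaf{Q}) = \dim K(\sheaf{Q})$. By Corollary~\ref{cor:nef}, this equality holds precisely when $\sheaf{Q}$ is nef, and by Proposition~\ref{prop:det}(3) this in turn is equivalent to the nefness of $\res{\sheaf{E}}{X\times\{y\}}$ (for any, equivalently every, $y\in Y$, since these bundles differ only by homogeneous twists and the isogeny $f$ of Lemma~\ref{lem:push}). This also yields the parenthetical equivalence with nefness of the determinant.

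\emph{Expected difficulty.} There is essentially no obstacle: the statement is a formal consequence of results already in place, and the only point requiring a little care is making sure one is entitled to pass from ``$\sheaf{G}_n$ nondegenerate'' to ``$\det\sheaf{G}_n$ ample $\iff$ index zero'' — i.e., correctly citing the index theorem for nondegenerate line bundles on abelian varieties — and that the index of $\sheaf{G}_n$ really is computed by Theorem~\ref{thm:index} rather than merely bounded.
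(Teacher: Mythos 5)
Your proof is correct and follows essentially the same route as the paper's: reduce ampleness of the simple semihomogeneous bundle $\sheaf{G}_n$ to the condition ``nondegenerate of index $0$'' via Propositions~\ref{prop:det} and~\ref{prop:wit}, then combine the index formula of Theorem~\ref{thm:index} with Corollary~\ref{cor:nef} and Proposition~\ref{prop:det}(3). The paper's version is merely more compressed; your extra care about citing Corollary~\ref{cor:stable} and the classical index theorem for nondegenerate line bundles is consistent with what the paper leaves implicit.
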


\begin{proof}
  A line bundle is ample if and only if its is nondegenerate and has
  index $0$. The same holds for any simple semihomogeneous vector
  bundle, since both conditions ``ample'' and ``nondegenerate of index
  $0$'' can be tested on the determinant line bundle, by
Proposition
  \ref{prop:det} and Proposition \ref{prop:wit}. Hence, by the theorem,
  the simple semihomogeneous vector bundle $\sheaf{G}_n$ is ample if and only
  if $i(\sheaf{Q})=\dim K(\sheaf{Q})$, where $\sheaf{Q}$ is the
  determinant of $\res{\sheaf{E}}{X\times\{y\}}$. By Corollary
\ref{cor:nef} this is
  equivalent to $\sheaf{Q}$ being nef, which again is equivalent to 
  nefness of $\res{\sheaf{E}}{X\times\{y\}}$, by Proposition
\ref{prop:det}.
\end{proof}

\bibliographystyle{amsplain}
\bibliography{semihom}

\end{document}